\documentclass{article}
\usepackage[final]{neurips_2020}
\usepackage[utf8]{inputenc} %
\usepackage[T1]{fontenc}    %

\usepackage{psfrag}

\usepackage{booktabs}       %
\usepackage{nicefrac}       %
\usepackage{microtype}      %

\usepackage{color}
\usepackage{tikz}
\usepackage{tikz-3dplot}
\usepackage{algorithm,algorithmic}
\usepackage[algo2e,ruled,vlined]{algorithm2e}
\usepackage{amssymb}
\usepackage{graphicx}
\usepackage{url}
\usepackage{setspace}
\usepackage{subfigure}
\usepackage{wrapfig}
\usepackage{enumitem}
\usepackage[pdftex,bookmarksnumbered,bookmarksopen,colorlinks,citecolor=blue,linkcolor=blue,urlcolor=blue]{hyperref}
\usepackage{mathrsfs}
\usepackage{amsmath,amsfonts,amsthm}
\usepackage{bm}
\usepackage{natbib}
\setcitestyle{numbers,square,comma}

\usepackage{multicol}
\usepackage{multirow}
\usepackage{tcolorbox}
\usepackage[page]{appendix}

\usepackage[normalem]{ulem}
\newcommand\redout{\bgroup\markoverwith
{\textcolor{red}{\rule[.5ex]{2pt}{0.4pt}}}\ULon}

\usepackage{graphicx}
\usepackage{subfigure}
\usepackage{threeparttable}
\newtheorem{theorem}{Theorem}
\newtheorem{lemma}{Lemma}

\newtheorem{remark}{Remark}

\newtheorem{assumption}{Assumption}

\usepackage{amsmath,amsfonts,bm}

\def\1{\bm{1}}

\def\va{{\bm{a}}}
\def\vb{{\bm{b}}}

\def\vw{{\bm{w}}}
\def\vx{{\bm{x}}}
\def\vy{{\bm{y}}}
\def\vz{{\bm{z}}}

\DeclareMathAlphabet{\mathsfit}{\encodingdefault}{\sfdefault}{m}{sl}
\SetMathAlphabet{\mathsfit}{bold}{\encodingdefault}{\sfdefault}{bx}{n}

\def\sR{{\mathbb{R}}}

\newcommand{\E}{\mathbb{E}}

\DeclareMathOperator*{\argmin}{arg\,min}

\title{Variance Reduction via Accelerated Dual Averaging for Finite-Sum Optimization}

\author{
 Chaobing Song$^\dagger$\thanks{ This work was conducted during Chaobing Song's visit to Professor Yi Ma's group at UC Berkeley.} \hspace{1cm}
 Yong Jinag$^\dagger$\hspace{1cm}
Yi Ma$^\ddagger$
\\
\vspace*{-0.05in} 
\\
$^\dagger$Tsinghua-Berkeley Shenzhen Institute, Tsinghua University\\ {songcb16@mails.tsinghua.edu.cn},\quad {jiangy@sz.tsinghua.edu.cn}\\
$^\ddagger$Department of EECS, University of California, Berkeley\\
{yima@eecs.berkeley.edu}
}

\begin{document}

\maketitle

\vspace{-0.3cm}

\begin{abstract}
In this paper, we introduce a simplified and unified method for finite-sum convex optimization, named \emph{Variance Reduction via Accelerated Dual Averaging (VRADA)}. In both general convex and strongly convex settings, VRADA can attain an $O\big(\frac{1}{n}\big)$-accurate solution in $O(n\log\log n)$ number of stochastic gradient evaluations which improves the best known result $O(n\log n)$, where $n$ is the number of samples. Meanwhile, VRADA matches the lower bound of the general convex setting up to a $\log\log n$ factor and matches the lower bounds in both regimes $n\le \Theta(\kappa)$ and $n\gg \kappa$ of the strongly convex setting, where $\kappa$ denotes the condition number. Besides improving  the best known results and matching all the above lower bounds simultaneously, VRADA has more unified and simplified algorithmic implementation and convergence analysis for both the general convex and strongly convex settings. The underlying novel approaches  such as the novel initialization strategy in VRADA may be of independent interest. 
Through experiments on real datasets, we show the good performance of VRADA over existing methods for large-scale machine learning problems.  
\end{abstract}

\section{Introduction}

In this paper, we study the following composite convex optimization problem:
\vspace{-1.5mm}
\begin{eqnarray}
\min_{\vx\in\sR^d} f(\vx) := g(\vx) + l(\vx) :=  \frac{1}{n}\sum_{i=1}^n g_i(\vx) + l(\vx), \label{eq:prob}
\vspace{-1.5mm}
\end{eqnarray}
where $g(\vx):=  \frac{1}{n}\sum_{i=1}^n g_i(\vx)$ with $g_i(\vx)$ being convex and smooth, and $l(\vx)$ is convex, probably nonsmooth but admitting an efficient proximal operator. In this paper, we mainly assume that each $g_i(\vx)$ is $L$-smooth $(L>0)$ and $l(\vx)$ is $\sigma$-strongly convex ($\sigma\ge 0$). If $\sigma=0$, then the problem is {\em general convex}. If $\sigma>0$, then the problem is {\em strongly convex} and we define the corresponding condition number $\kappa := L/\sigma$. Instances of problem \eqref{eq:prob} appear widely in statistical learning, operations research, and signal processing. For instance, in machine learning, if $\forall i\in[n], g_i(\vx) := h_i(\langle \va_i, \vx\rangle)$, where $h_i:\sR\rightarrow \sR$ is a convex loss function and $\va_i\in\sR^d$ is the data vector, then the problem \eqref{eq:prob} is also called regularized {\em  empirical risk minimization} (ERM). Important instances of ERM include ridge regression, Lasso, logistic regression, and support vector machine. 

In the large-scale setting where $n$ is large, first-order methods become the natural choice for solving \eqref{eq:prob} due to its better scalability. However, when $n$ is very large, even accessing the full gradient $\nabla g(\vx)$ becomes prohibitively expensive. To alleviate this difficulty, a common approach is to use an unbiased stochastic gradient $\nabla g_i(\vx)$ ($i$ is randomly chosen from $[n]:=\{1,2,\ldots, n\}$) to replace the full gradient $\nabla g(\vx)$ in each iteration, \emph{a.k.a.}, {\em stochastic gradient descent (SGD)}. In the stochastic setting, the goal to solve \eqref{eq:prob} becomes to find an expected $\epsilon$-accurate solution $\vx\in\sR^d$ satisfying $\E[f(\vx)] - f(\vx^*)\le \epsilon,$ where $\vx^*$ is an exact minimizer of \eqref{eq:prob}. Typically, the iteration complexity result of such an  algorithm is evaluated by the number of evaluating stochastic gradients $\nabla g_i(\vx)$ needed to achieve the $\epsilon$-accurate solution. 

By only accessing stochastic gradients, SGD has a low per-iteration cost. However, SGD has a very high iteration complexity due to the constant variance $\|\nabla g_i(\vx) - \nabla g(\vx)\|$. To reduce the iteration complexity of SGD while still maintaining its low per-iteration cost, a remarkable progress in the past decade is to exploit the finite-sum structure of $g$ in \eqref{eq:prob} to reduce the variance of stochastic gradients. In such {\em  variance reduction} methods, instead of directly using $\nabla g_i(\vx)$, we compute a full gradient $\nabla g(\tilde{\vx})$ of an anchor point $\tilde{\vx}$ beforehand. Then we use the following variance reduced gradient 
\begin{eqnarray}
\tilde{\nabla}g_i(\vx):=\nabla g_i(\vx) - \nabla g_i(\tilde{\vx}) + \nabla g(\tilde{\vx}) \label{eq:intro-vr}
\end{eqnarray}
as a proxy for the full gradient $\nabla g(\vx)$ during each iteration.
As a result, the amortized per-iteration cost is still the same as SGD. However, the variance reduced gradient \eqref{eq:intro-vr} is unbiased and can reduce the variance from $\|\nabla g_i(\vx) - \nabla g(\vx)\|$ to $\|\nabla g_i(\vx) - \nabla g_i(\tilde{\vx})\|$. As the variance $\|\nabla g_i(\vx) - \nabla g_i(\tilde{\vx})\|$ can vanish asymptotically, the iteration complexity of SGD can be substantially reduced. 

To this end, SAG \cite{roux2012stochastic} is  historically the first direct\footnote{For clarification, we say an algorithm is direct if it solves the problem \eqref{eq:prob} without any reformulation such as the  dual reformulation \cite{shalev2013stochastic}, primal-dual reformulation \cite{zhang2015stochastic} or warm restart reformulation \cite{allen2016optimal}. } variance reduction method to solve \eqref{eq:prob}  while it uses a biased estimation of the full gradient.  SVRG \cite{johnson2013accelerating} directly solves \eqref{eq:prob} and explicitly uses the unbiased estimation \eqref{eq:intro-vr} to reduce variance. Then SAGA \cite{defazio2014saga} provides an alternative of \eqref{eq:intro-vr} to avoid precomputing the gradient of an anchor point but with the price of an increased memory cost. Based on \cite{shalev2014accelerated}, a Catalyst approach \cite{lin2015universal} has been proposed to combine Nesterov's acceleration into variance reduction methods in a black box manner. \cite{allen2017katyusha} has proposed the first direct approach, named Katyusha (\emph{a.k.a.} accelerated SVRG), to combine variance reduction and a kind of Nesterov's acceleration scheme in a principled manner. \cite{woodworth2016tight} has given a tight lower complexity bound for finite-sum stochastic optimization and shown the tightness of Katyusha (with black-box reduction \cite{allen2016optimal}) up to a logarithmic factor. MiG \cite{zhou2018simple} follows and simplifies Katyusha by two-point coupling to produce acceleration. Varag \cite{lan2019unified} improves Katyusha further by considering a unified approach for both the general convex and strongly convex settings. Finally, \cite{hannah2018breaking} has proved improved convergence results for a variant of SVRG when $n\gg \kappa$, which is better than the best known results of accelerated ones such as Katyusha.

\subsection{Related Results and  Our Contributions}
In Table \ref{tb:result2}, for clarity, we list the state of the art results (as well as results of this paper and lower bounds \cite{woodworth2016tight}) for attaining an accuracy $\epsilon\ge \Theta\big( \frac{L}{n}\big)$. In Table \ref{tb:result}, we give the complexity results of representative \emph{direct} variance reduction methods for both the general convex and strongly convex settings (as well as results of this paper and lower bounds). The literature on variance reduction is too rich to list them all here.\footnote{For instance, when the objective \eqref{eq:prob} is strongly convex, we can also use randomized coordinate descent/ascent methods on the dual or primal-dual formulation of \eqref{eq:prob} to indirectly solve \eqref{eq:prob}, such as SDCA \cite{shalev2013stochastic} and Acc-ADCA \cite{shalev2014accelerated}, APCG  \cite{lin2014accelerated} and SPDC \cite{zhang2015stochastic}. Variance reduction methods have also been widely applied into distributed computing \cite{reddi2015variance,leblond2017asaga} and nonconvex optimization \cite{reddi2016stochastic,reddi2016proximal}.}  In Table \ref{tb:result}, we mainly list the algorithms with improved convergence results for at least one setting.

\begin{table*}[t!]
\caption{Complexity results for solving the problem \eqref{eq:prob} with accuracy $\epsilon\ge \Theta(L/n)$. }
\centering
\begin{tabular}{|c|c|c|}
\hline
Algorithm      &       {General/Strongly Convex}          \\
\hline 
SVRG$^{\rm ++}$ \cite{allen2016improved}       &                {$O\big(n\log\frac{1}{\epsilon}$}\big)\\
\hline    
Varag \cite{lan2019unified}  &   {$O\big(n\log \frac{1}{\epsilon} \big)  $}      \\  
\hline        
VRADA  (\textbf{This Paper}) &   $O\big(n\log\log \frac{1}{\epsilon} \big)$ \\
\hline
Lower bound \cite{woodworth2016tight}   & $\Omega(n)$    \\
\hline
\end{tabular}\label{tb:result2}
\vspace{-0.3cm}
\end{table*}

\begin{table*}[t!]
\centering
\begin{threeparttable}[b]
\begin{small}
\caption{Complexity results for solving the problem \eqref{eq:prob}. (``---'' means the corresponding result does not exist or is unknown.)}
\begin{tabular}{|c|c|c|c|}
\hline
\multirow{2}{*}{Algorithm}      &       \multirow{2}{*}{General Convex}                &       {Strongly Convex}       & {Strongly Convex}                \\
      &                &    $n\le \Theta(\kappa)$      &    $n\gg \kappa$                             \\
\hline
SAG \cite{roux2012stochastic}  & {---}  &   $O\big(n\kappa\log \frac{1}{\epsilon}\big)$ &  $O\big( n\log\frac{1}{\epsilon}\big)$               \\
\hline
SVRG \cite{johnson2013accelerating,xiao2014proximal,hannah2018breaking}           &            {---}            &     $O\big(\kappa\log\frac{1}{\epsilon}\big)$    & $O\Big(n + \frac{n}{\log(n/\kappa)}  \log\frac{1}{\epsilon}\Big)$      \\
\hline
SAGA  \cite{defazio2014saga}  &   {$O\big(\frac{n+L}{\epsilon}\big)$}   &     $O(\kappa\log\frac{1}{\epsilon})$    &  $O\big(n\log\frac{1}{\epsilon}\big)$          \\
\hline
SVRG$^{\rm ++}$ \cite{allen2016improved}       &                {$O\Big(n\log\frac{1}{\epsilon} + \frac{L}{\epsilon}$}\Big) &                {---}         &     {---}        \\
\hline
Katyusha$^{\rm sc}$  \cite{allen2017katyusha}   &  {---}    &    $O\big(\sqrt{n\kappa}\log\frac{1}{\epsilon}\big)$  & $O\big(n\log\frac{1}{\epsilon}\big)$                      \\ 
\hline
Katyusha$^{\rm nsc}$  \cite{allen2017katyusha}  &            {$O\Big(\frac{n+\sqrt{nL}}{\sqrt{\epsilon}}\Big)$}                     &       {---}            &       {---}                  \\ 
\hline
Varag \cite{lan2019unified}\tnote{1}  &   {$O\Big(n\log n + \frac{\sqrt{nL}}{\sqrt{\epsilon}}\Big)  $}      &   $O\Big(n\log n +  \sqrt{n\kappa}\log\frac{1}{\kappa \epsilon}\Big)$ \tnote{2}    &  $O(n\log\frac{1}{\epsilon})$    \\ 
\hline        
VRADA\tnote{1}   & $\!\!$ \multirow{2}{*}{ $O\Big(n\log\log n + \frac{\sqrt{nL}}{\sqrt{\epsilon}}\Big)$ } $\!\!$   & $\!\!$ \multirow{2}{*}{$O\Big(n\log\log n + \sqrt{n\kappa}  \log\frac{1}{n\epsilon}\Big)$}$\!\!\!$ \tnote{3}    &    $\!\!\!\!$     \multirow{2}{*}{
$O\Big(n + \frac{n}{\log(n/\kappa)}  \log\frac{1}{\epsilon}\Big)$}$\!\!\!$ \tnote{4}         \\
(\textbf{This Paper})  &         &      &  
           \\  
\hline
Lower bound \cite{woodworth2016tight}   & $\Omega\Big(n + \frac{\sqrt{nL}}{\sqrt{\epsilon}}\Big)$  &   $\Omega\Big(n + \sqrt{n\kappa} \log ( \sqrt{\frac{n}{\kappa}} \frac{1}{\epsilon})\Big)$ &  $O\Big(n + \frac{n}{\log(n/\kappa)}  \log\frac{1}{\epsilon}\Big)$ \tnote{5}   \\
\hline
\end{tabular}\label{tb:result}
\end{small}
\vspace{0.1in}
\begin{tablenotes}
\item[1] For both Varag and VRADA, the complexity results are given for accuracy $\epsilon<\Theta(L/n).$ (For $\epsilon\ge \Theta(L/n)$, see Table \ref{tb:result2}.)
\item[2] For more precise bounds of Varag, see \cite{lan2019unified}.
\item[3] For this setting, a slightly worse but simpler bound is $O(n + \sqrt{n\kappa}\log\frac{1}{\epsilon})$.  
\item[4] For this setting, the  $O\Big(n\log\log n + \frac{n}{\log( n/\kappa)}\log\frac{1}{n\epsilon}\Big)$ is also valid.
\item[5] This lower bound is only valid for the class of ``oblivious p-CLI'' algorithms \cite{arjevani2016dimension,hannah2018breaking}.  
\end{tablenotes}
\end{threeparttable}
\vspace{-0.6cm}
\end{table*}

\vspace{-0.1cm}

To understand where we stand with these complexity results, firstly
we are particularly interested in attaining a solution with a proper accuracy such as $\epsilon = \Theta\big(\frac{L}{n}\big)$.\footnote{This is because, in the context of large-scale statistical learning, due to statistical limits \cite{bottou2008tradeoffs, shalev2008svm}, even under some strong regularity conditions \cite{bottou2008tradeoffs}, obtaining an $O\big(\frac{1}{n}\big)$ accuracy will be sufficient.} To attain this accuracy, 
as shown in Table \ref{tb:result2}, for both general/strongly convex settings, the non-accelerated SVRG$^{++}$ and accelerated Varag need $O(n\log n)$\footnote{The linear convergence result is irrelevant to the problem being strongly convex or not.} number of iterations whereas the lower bound \cite{woodworth2016tight} implies that we may only need $\Omega(n)$ iterations. Before this work, it is not known whether the logarithmic factor gap can be further reduced or not.

As shown in Table \ref{tb:result}, in the \emph{general convex setting}, the best known rate is  $O\big(n\log n + \frac{\sqrt{nL}}{\sqrt{\epsilon}}\big)$ \footnote{The rate is firstly obtained by combining Katyusha$^{\rm sc}$ with black-box reduction, which is an indirect solver.}. In the {\em strongly convex setting} with $n\le \Theta(\kappa)$, as shown in Table \ref{tb:result}, for small $\epsilon,$ both the complexity results of  Katyusha$^{\rm sc}$ and Varag can match the lower bound $\Omega\big( n + \sqrt{n\kappa}\log\big(\sqrt{\frac{n}{\kappa}}  \frac{1}{\epsilon}\big)\big)$ for any randomized algorithms with ``gradient and proximal operator'' oracle \cite{woodworth2016tight}. When $n\gg \kappa$ (which is common in the statistical learning context such as $\kappa=O(\sqrt{n})$ \cite{bousquet2002stability}), a widely known complexity result is $O(n\log\frac{1}{\epsilon})$, attained by both non-accelerated and accelerated methods. However, \cite{hannah2018breaking} showed that a variant of SVRG with different parameter settings has a better iteration complexity $O\big( n + \frac{n}{\log(n/\kappa)}\log\frac{1}{\epsilon}\big)$ than   $O(n\log\frac{1}{\epsilon})$.  The bound $O\big( n + \frac{n}{\log(n/\kappa)}\log\frac{1}{\epsilon}\big)$ is  proved to be optimal for the class of so called ``oblivious p-CLI algorithms'' \cite{arjevani2016dimension,hannah2018breaking}, despite the fact that the bound involves large constants. So the situation seems to be: before this work, there exists no single algorithm that can match the lower bounds of the three settings simultaneously in Table \ref{tb:result}. Meanwhile, for  accelerated methods,  \cite{allen2017katyusha,zhou2018simple} can not unify the general convex/strongly convex settings, while \cite{lan2019unified} unifies both settings with very complicated parameter settings and thus is not very practical.

\vspace{-1mm}

\paragraph{Efficiency.} 

As shown in Table \ref{tb:result2}, to attain a solution with $\epsilon\ge \Theta\big(\frac{L}{n}\big),$ the proposed \emph{Variance Reduction via Accelerated Dual Averaging (VRADA)} algorithm only needs  $O\big(n\log\log \frac{1}{\epsilon}\big)$ number of iterations, 
while the best known result is $O\big(n\log\frac{1}{\epsilon}\big)$.

In the general convex setting, as shown in Table \ref{tb:result}, to attain an accuracy $\epsilon<\Theta\big(\frac{L}{n}\big)$, our VRADA method achieves the iteration complexity \vspace{-2mm}
\begin{eqnarray}\vspace{-3mm}
O\Big(n\log\log n + \frac{\sqrt{nL}}{\sqrt{\epsilon}}\Big), \label{eq:nonstrong}\vspace{-3mm}
\end{eqnarray}
which matches the lower bound up to a 
\emph{$\log\log$} factor, while the best known result before is $O\big(n\log n + \frac{\sqrt{nL}}{\sqrt{\epsilon}}\big)$. As practically speaking, the $\log\log$ factor can be treated as a small constant: for instance when $n\le 2^{64}$, we have  $n\log\log n\le 6n.$ Thus, for general convex problems, VRADA can attain an $\Theta\big(\frac{L}{n}\big)$-accurate solution with essentially $O(n)$ iterations, practically matches the lower bound! 

\vspace{-1mm}
In the strongly convex setting with $n\le \Theta(\kappa)$ and $\epsilon< \Theta\big(\frac{L}{n}\big)$, the bound of VRADA becomes %
\begin{eqnarray}
O\Big(n\log\log n + \sqrt{n\kappa}\log\frac{1}{n\epsilon}\Big), \label{eq:n-le-kappa}
\end{eqnarray}
which is slightly better than the simpler bound $O\big(n +\sqrt{n\kappa}\log\frac{1}{\epsilon}\big)$ as $n\log\log n \le \sqrt{n\kappa}\log n.$ Meanwhile, it also matches the corresponding lower bound for small $\epsilon>0.$

In the strongly convex setting with $n\gg \kappa$ and $\epsilon< \Theta\big(\frac{L}{n}\big)$, the rate of VRADA becomes 
\begin{eqnarray}
O\Big(n  + \frac{n}{\log(n/\kappa)} \log \frac{1}{n\epsilon}\Big), \quad \text{or} \;\;  O\Big(n\log\log n  + \frac{n}{\log(n/\kappa)}\log \frac{1}{n\epsilon} \Big), \label{eq:n-gg-kappa} 
\end{eqnarray}
which matches the lower bound for the class of ``oblivious p-CLI algorithms''  \cite{hannah2018breaking}. Compared with the best-known result \cite{hannah2018breaking} for the non-accelerated SVRG, VRADA involves very intuitive parameter settings and thus has small constants in the bound \eqref{eq:n-gg-kappa}. So we can say, VRADA matches the lower bounds of the three settings simultaneously for the first time.

\vspace{-1mm}
\paragraph{Simplicity.}
VRADA follows the framework of MiG \cite{zhou2018simple}, thus it only needs two-point coupling in the inner iteration rather than three-point coupling in Katyusha and Varag.  Furthermore, similar to MiG, VRADA only needs to keep track of one variable vector in the inner loop, which gives it a better edge in sparse and asynchronous settings \cite{zhou2018simple} than  Katyusha and Varag. 
In the general convex setting, VRADA is also a direct solver without any extra effort to attain the improved complexity result \eqref{eq:nonstrong}. In the strongly convex setting, VRADA attains the optimal results \eqref{eq:n-le-kappa} and  \eqref{eq:n-gg-kappa} by using a natural uniform average, fixed and intuitive inner number of iterations and consistent parameter settings for all the epochs, while Katyusha$^{\rm sc}$ and  MiG$^{\rm sc}$ use a weighted average, and Varag uses different parameter settings for the first $\Theta(\log n)$ epochs and the other epochs respectively. 

\vspace{-1mm}
\paragraph{Unification.} VRADA uses the same parameter setting for both the general convex and strongly convex settings. The only difference is that in VRADA, we set the parameter $\sigma = 0$ in the general convex setting, while we set $\sigma >0$ in the strongly convex setting. Meanwhile, based on a ``generalized estimation sequence'', we conduct a unified convergence analysis for both settings. The only difference is that the values of two predefined sequences of positive numbers are different. Correspondingly, Katyusha$^{\rm sc}$ and Katyusha$^{\rm nsc}$ (as well as MiG$^{\rm sc}$ and MiG$^{\rm nsc}$) use different parameter settings and independent convergence analysis for both the general convex and strongly convex settings. Varag provides a unified approach for both settings. However to adapt to both settings, the parameter settings of Varag are very complicated and cannot even be stated in the algorithm description.

\subsection{Our Approach}

\paragraph{Separation of Nesterov's Acceleration and Variance Reduction.} $\!\!\!\!\!\!\!\!\!$ \footnote{This insightful perspective is from an anonymous NeurIPS reviewer.} To combine Nesterov's acceleration and variance reduction, \cite{allen2017katyusha} has introduced negative momentum to make Nesterov's acceleration and variance reduction \emph{coexist in the inner loop}. Since \cite{allen2017katyusha}, all the follow-up methods \cite{zhou2018simple,lan2019unified} consider similar ideas. However, as results, the resulting convergence analysis becomes complicated and a weighted averaging in the outer iteration is necessary for the strongly convex setting. In this paper, we consider a very different approach instead: let Nesterov's acceleration occur in the outer iteration and variance reduction occur in the inner iteration, separately.   This approach makes the convergence analysis significantly simplified and  only uniform average needed for the strongly convex setting.

\vspace{-1mm}

\paragraph{Novel Initialization to Cancel Randomized Error.}
In SVRG-style variance reduction methods, we need to determine the number of inner iterations.
The most intuitive implementation of variance reduction methods is using a fixed number of inner iterations (\emph{e.g.,} $\Theta(n)$). However, such a natural choice makes Katyusha (as well as MiG \cite{zhou2018simple}) incur accumulated randomized errors, which makes it converge at a suboptimal  rate $O\Big(\frac{n+\sqrt{nL}}{\sqrt{\epsilon}}\Big)$ in the general convex setting. To alleviate this situation, one may consider an indirect black-box reduction approach \cite{allen2016optimal} or an  approach of half SVRG$^{++}$ and half SVRG \cite{lan2019unified} (\emph{i.e.,} exponentially increasing until a given
threshold) to reduce the complexity result to $O\big(n\log \frac{1}{\epsilon}  + \frac{\sqrt{nL}}{\sqrt{\epsilon}}\big)$, which makes both implementation and analysis complicated. 
In this paper, we consider a rather simplified and effective approach: we only do a (full) gradient descent step and a particular initialization of estimation sequence \emph{before} entering into the main loop. With this approach, we can simply use a  fixed number of inner iterations and reduce the complexity result to $O\big(n\log \log \frac{1}{\epsilon}  + \frac{\sqrt{nL}}{\sqrt{\epsilon}}\big).$

\vspace{-1mm}

\paragraph{Dual Averaging to Accumulate Strong Convexity.} The most common implementations of accelerated variance reduction methods are variants of (proximal) accelerated mirror descent (AMD) \cite{allen2017katyusha,zhou2018simple,lan2019unified}. In the strongly convex setting, AMD-based methods only exploit the strong convexity in the current iteration but still maintains the optimal dependence on $\epsilon$. However, the dependence on $n$ for these methods is not optimal when $n\gg \kappa$. In this paper, we consider an accelerated dual averaging (ADA) approach \cite{nesterov2015universal}. AMD and ADA are often viewed as two different kinds of generalizations for Nesterov's accelerated gradient descent, while ADA can exploit the strong convexity along the whole optimization trajectory. As a result, when $n\gg \kappa$, the resulting VRADA algorithm can improve the best known result $O\big(n\log\frac{1}{\epsilon}\big)$ of AMD-based methods by a log factor to  $O\big(n +  \frac{n}{\log (n/\kappa)}\log\frac{1}{\epsilon}\big)$.

\subsection{Other Related Works} 
\paragraph{Regarding the Lower Bound under Sampling with Replacement.} When the problem \eqref{eq:prob} is $\sigma$-strongly convex and $L$-smooth with $\kappa = L/\sigma$, 
\cite{lan2018optimal} has provided a stronger lower bound than \cite{woodworth2016tight} such that to find an $\epsilon$-accurate solution $\vx$ such that $\E[\|\vx - \vx^*\|^2]\le\epsilon, $ any randomized incremental gradient methods need at least 
\vspace{-1.5mm}
\begin{eqnarray}
\Omega\Big(\Big( n + \sqrt{n\kappa}\Big)\log \frac{1}{\epsilon}\Big) \label{eq:lower2}
\end{eqnarray}
number of iterations when the dimension $d$ is sufficiently large.
Our second upper bound in \eqref{eq:n-gg-kappa} is measured by $\E[f(\vx)]-f(\vx^*)$. By the strong convexity,  when $n\gg \kappa$ and $\epsilon\le \Theta(1/n)$,  if we convert to the Euclidean distance $\E[\|\vx - \vx^*\|^2]$, then our rate will be $O\Big(n\log \log n + \frac{n\log(\kappa/(n\epsilon))}{\log(n/\kappa)}\Big).$ 
At first sight, when $n\gg \kappa,$ our upper bound is actually better than the lower bound \eqref{eq:lower2} by a $\log(n/\kappa)$ factor, which seems rather surprising and was firstly observed for a variant of SVRG \cite{hannah2018breaking}. \cite{hannah2018breaking} explained this phenomenon by the fact that SVRG does not satisfy ``the span assumption'' that is intrinsic for the proof in \eqref{eq:lower2}. However, it cannot effectively explain why SDCA \cite{shalev2013stochastic}, a variance reduction method also not satisfying the span assumption, can not have such a log-factor gain. In this paper, we provide another point of view from the sampling strategy: the randomized incremental gradient methods of \cite{lan2018optimal} are referred to the ones by \emph{sampling with replacement completely}, while the proposed VRADA algorithm is not limited to the assumption of \cite{lan2018optimal}. In detail, VRADA is based on the two-loop structure of SVRG: in the outer loop, we compute the full gradient of an anchor point; in the inner loop, we compute stochastic gradients by sampling with replacement. In the outer loop of SVRG, the step of computing a full gradient can be viewed as stochastic gradient steps with $0$ step size by (implicitly) \emph{sampling without replacement}. \footnote{The outer loop of SVRG or our algorithm cannot be interpreted as stochastic gradient steps  by \emph{sampling with replacement} (say, with $0$ step size), as we cannot pick all the samples with probability $1$ by sampling with replacement for $n$ times.} Thus, the lower bound \eqref{eq:lower2} does not apply to VRADA.

\begin{remark}[Sampling without Replacement]
Very recently, the superiority of sampling without replacement has also been verified theoretically \cite{haochen2018random,gurbuzbalaban2019random,rajput2020closing,ahn2020tight}. Particularly, \cite{ahn2020tight} has shown that for strongly convex and smooth finite-sum problems, SGD without replacement (also known as random reshuffling) needs $O({\sqrt{n}}/{\sqrt{\epsilon}})$ number of stochastic gradient evaluations, which is tight and significantly better than the rate $O(1/\epsilon )$ of SGD with replacement \cite{hazan2014beyond}. Meanwhile, in practice, it is also more widely used in training deep neural network for its better efficiency \cite{bottou2009curiously, recht2013parallel}.
\end{remark}
\vspace{-2.5mm}
\paragraph{Other Acceleration Variants.}
Besides accelerated versions of SVRG, there are a randomized primal-dual method   RPDG \cite{lan2018optimal}, a randomized gradient extrapolation method RGEM \cite{lan2018random},   two accelerated versions Point-SAGA \cite{defazio2016simple} and SSNM \cite{defazio2016simple} of SAGA, and a unified approach for (random) SVRG/SAGA/SDCA/MISO \cite{kulunchakov2019estimate}. In the submission of this paper, another paper \cite{joulanisimpler} has also considered the approach of combining variance reduction and ADA. However, all these methods match the lower bound \eqref{eq:lower2}.  

\vspace{-0.2cm}

\section{Algorithm: Variance Reduction via Accelerated Dual Averaging}\label{sec:vr-ada}
\vspace{-2mm}
Let $[n]:=\{1,2,\ldots, n\}$. For simplicity, we only consider the Euclidean norm $\|\cdot\| \equiv \|\cdot\|_2.$  We first introduce a couple of standard assumptions about the  convexity and smoothness of the problem \eqref{eq:prob}. 
\vspace{-1mm}
\begin{assumption}\label{ass:lip}
$\forall i\in[n],$
$g_i(\vx)$ is convex, \em{i.e.,} $\forall \vx, \vy,$ $g_i(\vy)\ge g_i(\vx) + \langle \nabla g_i(\vx), \vy-\vx\rangle;$ $g_i(\vx)$ is $L$-smooth ($L>0$), \em{i.e.,} $\|\nabla g_i(\vy) -\nabla g_i(\vx)\|\le L\|\vy - \vx\|.$ \vspace{-1mm}
\end{assumption}
By Assumption \ref{ass:lip} and $g(\vx) = \frac{1}{n}\sum_{i=1}^n g_i(\vx)$, we can verify that $g(\vx)$ is $L$-smooth, \emph{i.e.,} $\forall \vx,\vy, \|\nabla g(\vy) - \nabla g(\vx)\|\le L \|\vy-\vx\|.$ Furthermore, we assume $l(\vx)$ satisfies: 
\begin{assumption}\label{ass:l}
$l(\vx)$ is $\sigma$-strongly convex ($\sigma\ge 0$),  \em{i.e.,}  $\forall \vx, \vy,$ and  $l^{\prime}(\vx)\in\partial l(\vx),$ $l(\vy)\ge l(\vx) + \langle l^{\prime}(\vx), \vy-\vx\rangle+\frac{\sigma}{2}\|\vy-\vx\|^2;$ when $\sigma =0,$ we also say $l(\vx)$ is general convex.
\vspace{-1mm}
\end{assumption}
To realize acceleration with dual averaging, we recursively define the following generalized estimation sequence for the finite-sum problem \eqref{eq:prob}:
\begin{eqnarray}
\psi_{s,k}(\vz)&:=&\psi_{s, k-1}(\vz) + a_s\big(g(\vy_{s, k}) + \langle \tilde{\nabla}_{s,k}, \vz - \vy_{s, k}\rangle + l(\vz)\big),\label{eq:ges}
\end{eqnarray}
with the initialization $\psi_{1, 0}(\vz) := \frac{1}{2}\|\vz-\tilde{\vx}_0\|^2$, $\psi_{2,0}:= m \psi_{1,1}$ with $m\in \mathbb{Z}_+$, $k\in\{1,2,\ldots, m\}$\footnote{As we will see, $m$ denotes the number of inner iterations in our algorithm.}, and $\psi_{s+1,0}:=\psi_{s,m}$  (for $s\ge 2$), where $\{a_s\}$ is a sequence of positive numbers to be specified later. Here $\{\vy_{s,k}\}$ is a sequence of vectors that will be generated by our algorithm, $\{\tilde{\nabla}_{s,k}\}$ is a sequence of variance reduced stochastic gradients evaluated at $\{\vy_{s,k}\}.$ If $m=1$ and $\tilde{\nabla}_{s,k} = \nabla g(\vy_{s,k})$, then we can verify that \eqref{eq:ges} is equivalent to  the classical definition of estimation sequence by Nesterov \cite{nesterov2015universal}. In the finite-sum setting, we set $m=\Theta(n)$ to amortize the computational cost per epoch $s$, where $n$ is the number of sample functions in \eqref{eq:prob}. Then we say $\psi_{s,k}$ is the estimation sequence in the (inner) $k$-th iteration of the $s$-th epoch. For convenience, we define $A_s  := A_{s-1} + a_s$ with $A_0 := 0$.

Based on the definition \eqref{eq:ges}, Algorithm \ref{alg:vr-ada} summarizes the proposed \emph{ Variance Reduction via Accelerated Dual Averaging (VRADA)} method. As we see, besides the steps about updating the estimation sequence such as Steps 2-4, 6 and 12,  Algorithm \ref{alg:vr-ada} mainly follows the framework of the simplified MiG \cite{zhou2018simple} of Katyusha. The main formal differences are that we have novel and effective initialization steps in Steps 2-4 and replace the mirror descent step in MiG by  Step 12, a dual averaging step:
\begin{equation}
\vz_{s,k} := \arg\min_{\vz}\psi_{s,k}(\vz). 
\label{eqn:dual-averaging}
\end{equation}

\vspace{-0.2cm}
To be self-contained, note that we compute the full gradient on the anchor point in Step 7 and compute the variance reduced stochastic gradient $\tilde{\nabla}_{s,k}$ in Steps 10 and 11. Steps 9 ,12 and 14 are used to achieve acceleration. The settings for $\tilde{\vx}_s$, $\vz_{s+1,0}$ and $\psi_{s+1,0}$  in Step 14 are derived from our analysis. Notice that we update $\tilde{\vx}_s$ as a natural uniform average with respect
to $\{\vz_{s,k}\}$ for both the general convex and strongly convex settings.

In Step 12, a significant characteristic is the weight $a_s$ invariant in all the inner iterations of the $s$-th epoch. As a result, at the first time, we ``decouple'' Nesterov's acceleration and variance reduction: the acceleration phenomenon will occur \emph{per epoch}, while the inner iterations are mainly used for variance reduction. More precisely, the ``negative momentum'' in Step 9 is used to fuse the variance reduction technique into the acceleration framework, while the uniform averaging in Step 14 plays the role of ``Nesterov's momentum'' to achieve acceleration.

In Steps 2-4, the novel initialization steps provide us ``a right way'' to cancel the accumulated randomized error in the main loop: after performing a (proximal) gradient descent step, we initialize $\psi_{2,0}$ as the $m$ times of $\psi_{1,1}$. As we will see in our proof,  the accumulated randomized error in the $m$ inner iterations will be completely canceled by our initialization steps.    

In Step 12, due to the nature of dual averaging, we do not linearize $l(\vz)$ along the whole optimization trajectory. As a result, when $l(\vz)$ is strongly convex, it allows us to accumulate all the strong convexity in the optimization path. As we will see in the proof, this accumulation of strong convexity is crucial for our algorithm to achieve the optimal  convergence rate in the regime $n\gg \kappa.$

\begin{algorithm}[t]
\caption{Variance Reduction via Accelerated Dual Averaging (VRADA)}
\begin{algorithmic}[1]
\STATE {\textbf{Problem:} $\min_{\vx\in\sR^d} f(\vx) = g(\vx) + l(\vx) =  \frac{1}{n}\sum_{i=1}^n g_i(\vx) + l(\vx).$}  
\STATE {\textbf{Initialization:} $A_0=0, A_1 =a_1 = \frac{1}{L}$, $\vy_{1,1}=\vz_{1, 0}=\tilde{\vx}_0\in\sR^d, \psi_{1, 0}(\vz) = \frac{1}{2}\|\vz - \tilde{\vx}_0\|^2$.  }
\STATE $\vz_{1,1} = \argmin_{\vz}\big\{\psi_{1,1}(\vz):=\psi_{1, 0}(\vz) + a_1(g(\vy_{1, 1}) + \langle \nabla g(\vy_{1,1}), \vz - \vy_{1,1}\rangle + l(\vz))\big\}.$
\STATE $\tilde{\vx}_1 = \vz_{1,1}, \vz_{2,0} = \vz_{1, 1}, \psi_{2,0} = m\psi_{1, 1}.$
\FOR{$s=2, \ldots, S$} 
\STATE $A_s = A_{s-1}+\sqrt{ \frac{m A_{s-1}(1+\sigma  A_{s-1})}{2L}}$ and  $a_s = A_s - A_{s-1}$.
\STATE ${\bm \mu}_{s-1} = \nabla g(\tilde{\vx}_{s-1}).$
\FOR{$k=1,2,\ldots, m$}
\STATE $\vy_{s,k} = \frac{A_{s-1}}{A_s}\tilde{\vx}_{s-1} + \frac{a_s}{A_s}\vz_{s, k-1}.$  
\STATE Sample $i$ from $\{1,2,\ldots, n\}$ uniformly at random.
\STATE $\tilde{\nabla}_{s, k} = \nabla g_i(\vy_{s, k}) - \nabla g_i(\tilde{\vx}_{s-1}) + {\bm \mu}_{s-1}.$
\STATE $\vz_{s,k} = \argmin_{\vz}\big\{\psi_{s,k}(\vz):=\psi_{s, k-1}(\vz) + a_s(g(\vy_{s, k}) + \langle \tilde{\nabla}_{s,k}, \vz - \vy_{s, k}\rangle + l(\vz))\big\}.$ 
\ENDFOR
\STATE $\tilde{\vx}_s =\frac{A_{s-1}}{A_s}\tilde{\vx}_{s-1}+  \frac{a_{s}}{mA_s}
\sum_{k=1}^m \vz_{s, k},  \quad  \vz_{s+1, 0} = \vz_{s, m}, \quad  \psi_{s+1, 0} = \psi_{s, m}.$
\ENDFOR
\STATE \textbf{return: } $\tilde{\vx}_{S}$
\end{algorithmic}
\label{alg:vr-ada}
\end{algorithm}

Then  we can prove (in Section \ref{sec:analysis} and Appendix) the main result for the proposed VRADA algorithm:%
\begin{theorem}\label{thm:result}
Let $\{\tilde{\vx}_s\}$ be generated by Algorithm \ref{alg:vr-ada}. Under Assumptions \ref{ass:lip} and \ref{ass:l}, and taking expectation on the randomness of all the history, we have:
$\forall s\ge 2,$ 
\begin{eqnarray}
\E[f(\tilde{\vx}_s)] - f(\vx^*) \le  \frac{\|\tilde{\vx}_0 - \vx^*\|^2}{2A_s}, \label{eq:result}
\end{eqnarray}
where $\forall s \ge 2,$
 \begin{eqnarray}
 \!\!A_s\ge \max\Bigg\{ \frac{m}{2L}\Big( \frac{2}{m} \Big)^{2^{-(s-1)}}, \frac{1}{L}\Big(1 + \sqrt{\frac{\sigma m}{2L}}\Big)^{s-1}\Bigg\}, \label{eq:As1}
 \end{eqnarray}
 and  with $s_0 = 1+ \lceil \log_2 \log_2 (m/2) \rceil,$ besides the lower bounds in \eqref{eq:As1}, we also have $\forall s\ge s_0$, 
 \begin{eqnarray}
 A_s \ge\max \Bigg\{\frac{m}{32L}\Big(s-s_0 +2\sqrt{2}\Big)^2, \; \frac{m}{4L}\Big(1 + \sqrt{\frac{\sigma m}{2L}}\Big)^{s-s_0}\Bigg\}.  \label{eq:As2}
 \end{eqnarray}
\end{theorem}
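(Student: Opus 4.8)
Throughout, write $\psi_{s,m}^{*}:=\min_{\vz}\psi_{s,m}(\vz)$. The plan is to derive \eqref{eq:result} from a generalized estimation‑sequence argument, and then obtain \eqref{eq:As1}--\eqref{eq:As2} by an elementary induction on the deterministic recursion $A_s=A_{s-1}+\sqrt{mA_{s-1}(1+\sigma A_{s-1})/(2L)}$, $A_1=1/L$. Unrolling \eqref{eq:ges} with $\psi_{1,0}(\vz)=\tfrac12\|\vz-\tilde{\vx}_0\|^2$, $\psi_{2,0}=m\psi_{1,1}$, $\psi_{s+1,0}=\psi_{s,m}$ shows that $\psi_{s,m}$ carries Euclidean curvature $m$ around $\tilde{\vx}_0$ and total weight $mA_s=m\sum_{t\le s}a_t$ on the linearized objective, so it suffices to prove, for every $s\ge 2$,
\[
mA_s\,\E[f(\tilde{\vx}_s)]\ \le\ \E[\psi_{s,m}^{*}]\ \le\ \E[\psi_{s,m}(\vx^*)]\ \le\ \tfrac{m}{2}\|\tilde{\vx}_0-\vx^*\|^2+mA_s f(\vx^*),
\]
and then divide by $mA_s$. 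The last two inequalities are routine: the middle one is trivial, and for the right one every linearized block satisfies $g(\vy_{s,k})+\langle\nabla g(\vy_{s,k}),\vx^*-\vy_{s,k}\rangle+l(\vx^*)\le f(\vx^*)$ by convexity of $g$, while $\vy_{s,k}$ is measurable with respect to the past and $\tilde{\nabla}_{s,k}$ is conditionally unbiased for $\nabla g(\vy_{s,k})$, so $\E\langle\tilde{\nabla}_{s,k}-\nabla g(\vy_{s,k}),\vx^*-\vy_{s,k}\rangle=0$.

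The left inequality is the real work, and I would prove it by induction on the epoch $s$. The base case is exactly Steps~2--4: since $\vy_{1,1}=\tilde{\vx}_0$ and $a_1=1/L$, the minimizer $\vz_{1,1}=\argmin_{\vz}\psi_{1,1}(\vz)$ is a single proximal‑gradient step, and $L$‑smoothness of $g$ makes the quadratic term cancel, giving $\psi_{1,1}^{*}\ge\tfrac1L f(\vz_{1,1})=A_1 f(\tilde{\vx}_1)$; multiplying by $m$ and using $\psi_{2,0}=m\psi_{1,1}$, $\vz_{2,0}=\vz_{1,1}$, gives $\psi_{2,0}^{*}\ge mA_1 f(\tilde{\vx}_1)$. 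For the epoch‑$s$ step I would run the standard inner‑loop telescoping: strong convexity of $\psi_{s,k}$ about its minimizer (with modulus $\mu_{s,k}=m(1+\sigma A_{s-1})+k\sigma a_s$) combined with $\psi_{s,k}(\vz_{s,k-1})=\psi_{s,k-1}^{*}+a_s\big(g(\vy_{s,k})+\langle\tilde{\nabla}_{s,k},\vz_{s,k-1}-\vy_{s,k}\rangle+l(\vz_{s,k-1})\big)$ yields a recursion for $\psi_{s,k}^{*}$ up to a $-\tfrac{\mu_{s,k}}{2}\|\vz_{s,k}-\vz_{s,k-1}\|^2$ term. On the other side, writing $\tilde{\vx}_s=\tfrac1m\sum_{k=1}^m\vx_{s,k}$ with $\vx_{s,k}:=\tfrac{A_{s-1}}{A_s}\tilde{\vx}_{s-1}+\tfrac{a_s}{A_s}\vz_{s,k}$ (so $\vx_{s,k}-\vy_{s,k}=\tfrac{a_s}{A_s}(\vz_{s,k}-\vz_{s,k-1})$), Jensen plus $L$‑smoothness of $g$ and (strong) convexity of $g,l$ bound $f(\tilde{\vx}_s)$ by an average of terms built from $g(\vy_{s,k})$, $\langle\nabla g(\vy_{s,k}),\vz_{s,k}-\vz_{s,k-1}\rangle$, $\tfrac{La_s^2}{2A_s^2}\|\vz_{s,k}-\vz_{s,k-1}\|^2$, and the $l$‑values. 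Matching the two sides and using $a_s^2=\tfrac{mA_{s-1}(1+\sigma A_{s-1})}{2L}$ shows $\tfrac{La_s^2}{A_s}\le\tfrac{m(1+\sigma A_{s-1})}{2}\le\tfrac{\mu_{s,k}}{2}$, so that the $\|\vz_{s,k}-\vz_{s,k-1}\|^2$ terms cancel with curvature to spare; the surplus curvature $k\sigma a_s$ accumulated along the inner loop — the genuine dual‑averaging gain that mirror‑descent variants do not get — is what produces the geometric factor in \eqref{eq:As1} and hence the $n\gg\kappa$‑optimal rate.

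The hard part will be controlling the \emph{accumulated randomized error}. Besides the conditionally mean‑zero term $\langle\tilde{\nabla}_{s,k}-\nabla g(\vy_{s,k}),\vz_{s,k-1}-\vy_{s,k}\rangle$, once $\langle\nabla g(\vy_{s,k}),\vz_{s,k}-\vz_{s,k-1}\rangle$ is rewritten through $\tilde{\nabla}_{s,k}$ one picks up a cross term $\langle\tilde{\nabla}_{s,k}-\nabla g(\vy_{s,k}),\vz_{s,k}-\vz_{s,k-1}\rangle$ that is \emph{not} mean zero, since $\vz_{s,k}$ depends on the fresh sample. I would split it with Young's inequality, absorb the $\tfrac{\mu_{s,k}}{4}\|\vz_{s,k}-\vz_{s,k-1}\|^2$ part into the leftover curvature, and bound the remaining $O\!\big(\tfrac{a_s^2}{\mu_{s,k}}\big)\|\tilde{\nabla}_{s,k}-\nabla g(\vy_{s,k})\|^2$ via the standard variance inequality $\E\big[\|\tilde{\nabla}_{s,k}-\nabla g(\vy_{s,k})\|^2\mid\text{past}\big]\le 2L\big(g(\tilde{\vx}_{s-1})-g(\vy_{s,k})-\langle\nabla g(\vy_{s,k}),\tilde{\vx}_{s-1}-\vy_{s,k}\rangle\big)$. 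Summing this nonnegative quantity over $k=1,\dots,m$ gives an error of size $\Theta(m)$ times a per‑step Bregman gap at $\tilde{\vx}_{s-1}$; the decisive point is that initializing $\psi_{2,0}=m\psi_{1,1}$ (equivalently, carrying an extra factor $m$ of curvature out of the preliminary proximal step) already accounts for exactly this $\Theta(m)$‑sized term, so the error cancels in expectation instead of compounding over epochs — which is precisely why a fixed inner length $m=\Theta(n)$ suffices for $O(n\log\log\tfrac1\epsilon)$ rather than $O(n\log\tfrac1\epsilon)$. Closing this cancellation cleanly (keeping the right telescoping of the $\tilde{\vx}_{s-1}$‑gaps against the outer acceleration) is the technical crux.

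Finally, \eqref{eq:As1} and \eqref{eq:As2} are pure calculus. For \eqref{eq:As1}: dropping $\sigma A_{s-1}\ge0$ gives $A_s\ge A_{s-1}+\sqrt{mA_{s-1}/(2L)}$, and since $x\mapsto x+\sqrt{mx/(2L)}$ is increasing, the ansatz $A_{s-1}\ge\tfrac{m}{2L}q^2$ with $q=(2/m)^{2^{-(s-1)}}$ propagates to $A_s\ge\tfrac{m}{2L}(q^2+q)\ge\tfrac{m}{2L}q$, the base case $s=1$ holding with equality; dropping the constant $1$ instead gives $A_s\ge A_{s-1}\big(1+\sqrt{\sigma m/(2L)}\big)$, hence the geometric bound. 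For \eqref{eq:As2}: the choice $s_0=1+\lceil\log_2\log_2(m/2)\rceil$ makes $2^{-(s_0-1)}\le 1/\log_2(m/2)$, so $(2/m)^{2^{-(s_0-1)}}\ge(m/2)^{-1/\log_2(m/2)}=\tfrac12$ and thus $A_{s_0}\ge\tfrac{m}{4L}$ by \eqref{eq:As1}; then for $s\ge s_0$ monotonicity keeps $\tfrac{2L}{m}A_{s-1}\ge\tfrac12$, which forces the increment $\sqrt{\tfrac{2L}{m}A_s}-\sqrt{\tfrac{2L}{m}A_{s-1}}=\big(\sqrt{\tfrac{2L}{m}A_s}+\sqrt{\tfrac{2L}{m}A_{s-1}}\big)^{-1}\sqrt{\tfrac{2L}{m}A_{s-1}}\ge\tfrac14$, and summing from $s_0$ with $\sqrt{\tfrac{2L}{m}A_{s_0}}\ge1/\sqrt2=2\sqrt2/4$ yields $\sqrt{\tfrac{2L}{m}A_s}\ge(s-s_0+2\sqrt2)/4$, i.e.\ the quadratic bound; restarting the geometric induction at $s_0$ from base $m/(4L)$ gives the remaining bound in \eqref{eq:As2}.
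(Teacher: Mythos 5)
Your proposal follows essentially the same route as the paper: the same sandwich $mA_s\,\E[f(\tilde{\vx}_s)]\le\E[\psi_{s,m}(\vz_{s,m})]\le mA_sf(\vx^*)+\tfrac{m}{2}\|\tilde{\vx}_0-\vx^*\|^2$ (the paper's Lemmas \ref{lem:lower}--\ref{lem:upper} combined as in \eqref{eq:s-3}--\eqref{eq:proof-1}), the same cancellation of the leftover $mA_1f(\tilde{\vx}_1)$ via $\psi_{2,0}=m\psi_{1,1}$ and Lemma \ref{lem:s-1}, the same Young's-inequality absorption of the non-mean-zero cross term into the surplus curvature supplied by $a_s^2=\tfrac{mA_{s-1}(1+\sigma A_{s-1})}{2L}$ (the paper's \eqref{eq:A}), and equivalent inductions for \eqref{eq:As1}--\eqref{eq:As2}. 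The only nit: your step $\sqrt{B_{s-1}}/(\sqrt{B_s}+\sqrt{B_{s-1}})\ge\tfrac14$ for the quadratic bound in \eqref{eq:As2} implicitly assumes $B_s\le 9B_{s-1}$, which can fail for large $\sigma$, but in that regime $\sqrt{B_s}-\sqrt{B_{s-1}}\ge 2\sqrt{B_{s-1}}\ge\sqrt{2}>\tfrac14$ anyway, so the conclusion is unaffected.
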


Theorem \ref{thm:result} gives a unified convergence result for both the general convex ($\sigma = 0$) and strongly convex ($\sigma >0$) settings. By \eqref{eq:result}, the objective gap is simply bounded by the term about  $\|\tilde \vx_0-\vx^*\|^2$.

To see implications of Theorem \ref{thm:result}, by the first term in \eqref{eq:As1}, whether strongly convex or not, VRADA can attain an $O\big(\frac{L }{m}\big)$-accurate solution in $1+\lceil \log_2\log_2(m/2) \rceil$ number of epochs and thus $A_{s_0} = \frac{m}{4L}.$
The superlinear phenomenon is by our novel initialization Steps 2-4 of Algorithm \ref{alg:vr-ada}. The best known convergence rate in the initial stage is firstly obtained by SVRG$^{++}$ \cite{allen2016improved}, which has shown a linear convergence rate in the initial stage for convex finite-sums. In contrast, the corresponding rate of VRADA is superlinear. To the best of our knowledge, we have not observed any theoretical justification of superlinear phenomenon for variance reduced first order methods. 

By the second term in \eqref{eq:As1}, in the strongly convex setting ($\sigma>0$),  we can have an accelerated linear convergence rate from the start. Note that whatever $m \le \Theta(\kappa)$ or $m\gg \kappa,$  the contracting ratio of VRADA will always be $\Big(1 + \sqrt{\frac{\sigma m}{2L}}\Big)^{-1}$, which will tend to $0$ as $m \rightarrow +\infty.$ However, for all the existing accelerated variance reduction methods such as Katyusha$^{\rm sc}$ and Varag, when $m\gg \kappa,$ the contracting ratio will be at least a constant such as $\frac{2}{3}$ in Katyusha$^{\rm sc}$. 

Then based on the prompt decrease in the superlinear initial stage, we also provide two new lower bounds for $A_s$ in \eqref{eq:As2}. By the first term in \eqref{eq:As2}, whether strongly convex or not, VRADA can have at least an accelerated sublinear rate. By the second term in \eqref{eq:As2}, in the strongly convex setting ($\sigma>0$), VRADA will maintain an accelerated linear rate.

Thus by Theorem \ref{thm:result}, by setting $m=\Theta(n),$ we obtain our improved iteration complexity results for both the general convex and strongly convex settings in Table \ref{tb:result}.

\begin{remark}
The generalized estimation sequence $\{\psi_{s,k}\}$ and the associated analysis is the key in proving our main result Theorem \ref{thm:result}, while it is commonly known to be difficult to understand.
However, the estimation sequence itself also has principled explanations \cite{diakonikolas2019approximate,song2019towards}.
In Section \ref{sec:analysis}, we show that the estimation sequence analysis leads to a very concise, unified, and principled convergence analysis for both the general convex and strongly convex settings. 
\end{remark}

\section{Convergence Analysis}\label{sec:analysis}
When using estimation sequence to prove convergence rate, the main task is to give the lower bound and upper bound of $\psi_{s,k}(\vz_{s,k})$, where $\vz_{s,k}=  \argmin_{\vz}\psi_{s,k}(\vz).$ The lower bound is given in terms of the objective value at the current iterate and the estimation sequence in the previous iteration, while the upper bound is in terms of the objective value at the optimal solution. (For simplicity, we only give the upper bound of $\psi_{s,m}(\vz_{s,m}).$) Then by telescoping and concatenating the lower bound and upper bound of $\psi_{s,k}(\vz_{s,k})$, we prove the rate in terms of the objective difference $f(\tilde{\vx}_s) - f(\vx^*)$ in expectation.
First, in the initial Step 3 of Algorithm \ref{alg:vr-ada}, by the smoothness property of $g(\vz)$ and the setting $A_1=a_1=\frac{1}{L},$ we have Lemma \ref{lem:s-1}.   %
\begin{lemma}[The initial step]\label{lem:s-1} 
It follows that  $\psi_{1,1}(\vz_{1,1}) \ge A_1 f(\tilde{\vx}_1).$   %
\end{lemma}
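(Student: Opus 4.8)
The plan is to unfold the definitions and apply the descent lemma exactly once; there is essentially nothing else to do. First I would write $\psi_{1,1}(\vz_{1,1})$ using Step 3 of Algorithm~\ref{alg:vr-ada} together with $\psi_{1,0}(\vz)=\frac12\|\vz-\tilde{\vx}_0\|^2$ and $\vy_{1,1}=\vz_{1,0}=\tilde{\vx}_0$, and write $A_1 f(\tilde{\vx}_1)=a_1\big(g(\vz_{1,1})+l(\vz_{1,1})\big)$ using $A_1=a_1$ and $\tilde{\vx}_1=\vz_{1,1}$. The summand $a_1 l(\vz_{1,1})$ occurs on both sides and cancels, so the claim $\psi_{1,1}(\vz_{1,1})\ge A_1 f(\tilde{\vx}_1)$ reduces to
\[
\frac12\|\vz_{1,1}-\tilde{\vx}_0\|^2 \;\ge\; a_1\Big(g(\vz_{1,1}) - g(\tilde{\vx}_0) - \langle \nabla g(\tilde{\vx}_0),\, \vz_{1,1}-\tilde{\vx}_0\rangle\Big).
\]

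Second, I would bound the right-hand side using the $L$-smoothness of $g$ (which, as observed right after Assumption~\ref{ass:lip}, is inherited from the $L$-smoothness of each $g_i$): the standard descent inequality $g(\vz)\le g(\vx)+\langle\nabla g(\vx),\vz-\vx\rangle+\frac{L}{2}\|\vz-\vx\|^2$, applied at $\vx=\tilde{\vx}_0$ and $\vz=\vz_{1,1}$, gives $g(\vz_{1,1})-g(\tilde{\vx}_0)-\langle\nabla g(\tilde{\vx}_0),\vz_{1,1}-\tilde{\vx}_0\rangle\le\frac{L}{2}\|\vz_{1,1}-\tilde{\vx}_0\|^2$, so the right-hand side above is at most $a_1\frac{L}{2}\|\vz_{1,1}-\tilde{\vx}_0\|^2$. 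Finally, plugging in $a_1=\frac1L$ turns this bound into exactly $\frac12\|\vz_{1,1}-\tilde{\vx}_0\|^2$, which is the left-hand side, closing the argument.

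There is no real obstacle here; the only ``ingredient'' is the calibration $a_1=1/L$, which is precisely what makes $a_1 L=1$ so that the smoothness penalty is absorbed by the proximal term $\psi_{1,0}$. I would also remark that the minimality of $\vz_{1,1}$ in Step 3 is never used in this proof — the same one-line computation shows $\psi_{1,1}(\vz)\ge A_1 f(\vz)$ for every $\vz\in\sR^d$ — so the lemma is simply the base case of the estimation-sequence lower bound that will subsequently be telescoped through the inner iterations in the rest of Section~\ref{sec:analysis}.
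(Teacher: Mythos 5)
Your proof is correct and follows essentially the same route as the paper's: unfold the definition of $\psi_{1,1}$ using $\vy_{1,1}=\vz_{1,0}=\tilde{\vx}_0$, apply the descent inequality (Lemma~\ref{lem:lip}) once, and absorb the $\frac{L}{2}\|\vz_{1,1}-\tilde{\vx}_0\|^2$ penalty into $\psi_{1,0}$ via the calibration $a_1=1/L$. Your observation that the optimality of $\vz_{1,1}$ is never used, so that in fact $\psi_{1,1}(\vz)\ge A_1 f(\vz)$ pointwise, is accurate (and implicit in the paper's chain of equalities as well).
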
%

\vspace{-0.1cm}

Lemma \ref{lem:s-1} will be used to cancel the error introduced by  $f(\tilde{\vx}_1)$ in the main loop. After entering into the main loop, by using the smoothness and convexity properties, the optimality condition of $\{\vz_{s,k}\}$, and the careful setting of $\{a_s\}$ and $\{A_s\}$, we obtain the lower bound of $\psi_{s,k}(\vz_{s,k})$ in Lemma \ref{lem:lower}.

\begin{lemma}[Lower bound]\label{lem:lower}
$\forall s\ge 2, k\ge 1,$ we have %
\begin{eqnarray}
&&\psi_{s,k}(\vz_{s,k})  \nonumber\\ 
&\ge& \psi_{s,k-1}(\vz_{s, k-1}) + a_sg(\vy_{s,k}) + A_s\Big(f(\vy_{s,k+1}) -  g(\vy_{s,k}) - \frac{A_{s-1}}{2A_sL}\|\tilde{\nabla}_{s,k}-\nabla g(\vy_{s,k})\|^2 \Big)  \nonumber\\
&&- A_{s-1}\langle \tilde{\nabla}_{s,k},  \tilde{\vx}_{s-1} - \vy_{s,k}\rangle -A_{s-1}l(\tilde{\vx}_{s-1}).  \label{eq:lower-1} 
\end{eqnarray}
\end{lemma}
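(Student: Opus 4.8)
The plan is to unroll one step of the recursion \eqref{eq:ges} at its minimizer $\vz_{s,k}$ and then exploit the strong convexity of the previous estimation function. By definition, $\psi_{s,k}(\vz_{s,k}) = \psi_{s,k-1}(\vz_{s,k}) + a_s\big(g(\vy_{s,k}) + \langle\tilde{\nabla}_{s,k}, \vz_{s,k} - \vy_{s,k}\rangle + l(\vz_{s,k})\big)$, and the crucial structural fact is that $\psi_{s,k-1}$ is $\gamma_{s,k-1}$-strongly convex with $\gamma_{s,k-1} = m(1+\sigma A_{s-1}) + \sigma(k-1)a_s \ge m(1+\sigma A_{s-1})$: the factor $m$ is inherited from the initialization $\psi_{2,0} = m\psi_{1,1}$ in Step 4 (morally $m$ copies of $\tfrac12\|\cdot - \tilde{\vx}_0\|^2$), and the $\sigma$-terms accumulate the strong convexity of $l$ over the epochs. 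Since $\vz_{s,k-1}$ minimizes $\psi_{s,k-1}$, this gives $\psi_{s,k-1}(\vz_{s,k}) \ge \psi_{s,k-1}(\vz_{s,k-1}) + \tfrac{\gamma_{s,k-1}}{2}\|\vz_{s,k} - \vz_{s,k-1}\|^2$, which is the only place strong convexity enters.

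Next I would bring $f(\vy_{s,k+1}) = g(\vy_{s,k+1}) + l(\vy_{s,k+1})$ into the estimate, where $\vy_{s,k+1} := \tfrac{A_{s-1}}{A_s}\tilde{\vx}_{s-1} + \tfrac{a_s}{A_s}\vz_{s,k}$ is the Step-9 point at index $k+1$. Convexity of $l$ together with this convex-combination form yields $A_s l(\vy_{s,k+1}) \le A_{s-1}l(\tilde{\vx}_{s-1}) + a_s l(\vz_{s,k})$, so the $a_s l(\vz_{s,k})$ just produced by the recursion is exactly consumed; and the descent lemma for the $L$-smooth $g$ applied at $\vy_{s,k}$ gives $A_s g(\vy_{s,k+1}) \le A_s g(\vy_{s,k}) + A_s\langle\nabla g(\vy_{s,k}), \vy_{s,k+1}-\vy_{s,k}\rangle + \tfrac{A_sL}{2}\|\vy_{s,k+1}-\vy_{s,k}\|^2$.

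The core of the argument is then a purely algebraic collapse using three identities, all immediate from Step 9: $\vy_{s,k+1}-\vy_{s,k} = \tfrac{a_s}{A_s}(\vz_{s,k}-\vz_{s,k-1})$, $\tilde{\vx}_{s-1}-\vy_{s,k} = \tfrac{a_s}{A_s}(\tilde{\vx}_{s-1}-\vz_{s,k-1})$, and $\vz_{s,k}-\vy_{s,k} = (\vz_{s,k}-\vz_{s,k-1}) + \tfrac{A_{s-1}}{A_s}(\vz_{s,k-1}-\tilde{\vx}_{s-1})$. Substituting these and cancelling $\psi_{s,k-1}(\vz_{s,k-1})$, $a_s g(\vy_{s,k})$, $a_s l(\vz_{s,k})$, $A_{s-1}l(\tilde{\vx}_{s-1})$, and the inner products of $\tilde{\nabla}_{s,k}$ against $\tilde{\vx}_{s-1}-\vz_{s,k-1}$ from both sides, the claimed bound \eqref{eq:lower-1} reduces to the scalar inequality $\tfrac{\gamma_{s,k-1}}{2}\|\Delta\|^2 + a_s\langle\ve,\Delta\rangle + \tfrac{A_{s-1}}{2L}\|\ve\|^2 - \tfrac{a_s^2 L}{2A_s}\|\Delta\|^2 \ge 0$, where $\Delta := \vz_{s,k}-\vz_{s,k-1}$ and $\ve := \tilde{\nabla}_{s,k}-\nabla g(\vy_{s,k})$. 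I would close this with Young's inequality $a_s\langle\ve,\Delta\rangle \ge -\tfrac{A_{s-1}}{2L}\|\ve\|^2 - \tfrac{a_s^2 L}{2A_{s-1}}\|\Delta\|^2$, which absorbs the $\|\ve\|^2$ term and leaves $\tfrac12\big(\gamma_{s,k-1} - \tfrac{a_s^2L}{A_{s-1}} - \tfrac{a_s^2L}{A_s}\big)\|\Delta\|^2$; the recursion in Step 6, $a_s^2 = \tfrac{mA_{s-1}(1+\sigma A_{s-1})}{2L}$, gives $\tfrac{a_s^2L}{A_{s-1}} = \tfrac{m(1+\sigma A_{s-1})}{2}$ and $\tfrac{a_s^2L}{A_s}\le\tfrac{m(1+\sigma A_{s-1})}{2}$ since $A_s\ge A_{s-1}$, so the bracket is at least $\gamma_{s,k-1} - m(1+\sigma A_{s-1}) = \sigma(k-1)a_s \ge 0$.

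I expect the main obstacle to be the bookkeeping in this collapse — in particular pinning down that the modulus of $\psi_{s,k-1}$ is at least $m(1+\sigma A_{s-1})$ (carefully tracking the $m$-scaling injected by the novel initialization of Step 4 together with the accumulated $l$-terms), and splitting the cross term $a_s\langle\ve,\Delta\rangle$ with precisely the coefficient $A_{s-1}/L$, which is exactly what makes the residual $\|\Delta\|^2$-coefficient non-negative given the choice of $a_s$ in Step 6. Everything else is routine substitution of the Step-9 identities, and no expectation is needed at this stage.
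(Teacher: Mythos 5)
Your proposal is correct and follows essentially the same route as the paper's proof: strong convexity of $\psi_{s,k-1}$ (modulus $m(1+\sigma A_{s-1})$ inherited from the initialization $\psi_{2,0}=m\psi_{1,1}$ and the accumulated $l$-terms) plus optimality of $\vz_{s,k-1}$, the convex-combination bound $A_s l(\vy_{s,k+1})\le A_{s-1}l(\tilde{\vx}_{s-1})+a_s l(\vz_{s,k})$, the descent lemma for $g$, and a Young split of the cross term calibrated so that \eqref{eq:A} makes the residual quadratic nonnegative. The only cosmetic difference is that you carry out the final balancing in the variable $\Delta=\vz_{s,k}-\vz_{s,k-1}$ while the paper rescales to $\vy_{s,k+1}-\vy_{s,k}=\tfrac{a_s}{A_s}\Delta$; the two are equivalent.
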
%

In Lemma \ref{lem:lower}, the term $\|\tilde{\nabla}_{s,k}-\nabla g(\vy_{s,k})\|^2$ is the variance we need to bound, of which the bound is given in Lemma \ref{lem:vr} based on the standard derivation in \cite{allen2017katyusha}. 
\begin{lemma}[Variance reduction]\label{lem:vr} 
$\forall s\ge 2, k\ge 1,$ taking expectation on the randomness over the choice of $i$ in the $k$-th iteration of $s$-th epoch, we have %
\begin{eqnarray}
\E[\|\tilde{\nabla}_{s,k}-\nabla g(\vy_{s,k})\|^2]\le 2L(g(\tilde{\vx}_{s-1}) - g(\vy_{s,k}) -  \langle \nabla g(\vy_{s,k}), \tilde{\vx}_{s-1} - \vy_{s,k}\rangle).\label{eq:vr-1}
\end{eqnarray}
\end{lemma}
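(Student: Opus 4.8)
The plan is to follow the standard variance-reduction computation (as in \cite{allen2017katyusha}), exploiting that $\tilde{\nabla}_{s,k}$ is an unbiased estimator of $\nabla g(\vy_{s,k})$ assembled from a single randomly sampled component. First I would condition on all the history up through the beginning of the $k$-th inner iteration of epoch $s$, so that $\vy_{s,k}$, $\tilde{\vx}_{s-1}$, and ${\bm\mu}_{s-1}=\nabla g(\tilde{\vx}_{s-1})$ are deterministic and the only remaining randomness is the uniform choice of $i\in[n]$. Setting $\vu_i := \nabla g_i(\vy_{s,k}) - \nabla g_i(\tilde{\vx}_{s-1})$, Step 11 of Algorithm \ref{alg:vr-ada} reads $\tilde{\nabla}_{s,k} = \vu_i + \nabla g(\tilde{\vx}_{s-1})$, while $\E_i[\vu_i] = \frac1n\sum_{j=1}^n\vu_j = \nabla g(\vy_{s,k}) - \nabla g(\tilde{\vx}_{s-1})$; hence $\tilde{\nabla}_{s,k} - \nabla g(\vy_{s,k}) = \vu_i - \E_i[\vu_i]$.

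Next I would invoke the elementary identity $\E_i\|\vu_i - \E_i[\vu_i]\|^2 = \E_i\|\vu_i\|^2 - \|\E_i[\vu_i]\|^2 \le \E_i\|\vu_i\|^2$, which reduces the claim to bounding $\E_i\|\vu_i\|^2 = \frac1n\sum_{i=1}^n\|\nabla g_i(\vy_{s,k}) - \nabla g_i(\tilde{\vx}_{s-1})\|^2$. The analytic core is the standard gradient-difference inequality for an $L$-smooth convex function $h$, namely $\|\nabla h(\vx) - \nabla h(\vy)\|^2 \le 2L\big(h(\vx) - h(\vy) - \langle \nabla h(\vy), \vx-\vy\rangle\big)$, for which I would include a one-line proof for completeness: the function $\phi(\vw):=h(\vw)-h(\vy)-\langle\nabla h(\vy),\vw-\vy\rangle$ is nonnegative by convexity and $L$-smooth, so the descent lemma applied to $\phi$ at $\vw=\vx-\tfrac1L(\nabla h(\vx)-\nabla h(\vy))$ (the minimizer of its quadratic upper model) gives $0\le\phi(\vx)-\tfrac1{2L}\|\nabla h(\vx)-\nabla h(\vy)\|^2$, which rearranges to the claimed bound. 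Applying this with $h=g_i$ (convex, $L$-smooth by Assumption \ref{ass:lip}), $\vx=\tilde{\vx}_{s-1}$, $\vy=\vy_{s,k}$, and averaging over $i\in[n]$ using $g=\frac1n\sum_i g_i$ and $\nabla g=\frac1n\sum_i\nabla g_i$, the right-hand side collapses into $2L\big(g(\tilde{\vx}_{s-1}) - g(\vy_{s,k}) - \langle\nabla g(\vy_{s,k}),\tilde{\vx}_{s-1}-\vy_{s,k}\rangle\big)$, which is exactly \eqref{eq:vr-1}.

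The derivation is essentially routine, so there is no serious obstacle; the only points that need a little care are the bookkeeping about the expectation — it is the conditional expectation over the single index $i$ drawn at iteration $(s,k)$, so every iterate appearing in \eqref{eq:vr-1} is frozen — and the observation that discarding the $-\|\E_i[\vu_i]\|^2$ term is precisely what turns the variance into the stated one-sided bound. Notably, neither the strong convexity of $l$ nor any property of the proximal term enters this lemma.
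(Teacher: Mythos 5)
Your proof is correct and follows essentially the same route as the paper's: apply the bias--variance identity to drop the $-\|\E_i[\vu_i]\|^2$ term, then bound $\E_i\|\nabla g_i(\vy_{s,k})-\nabla g_i(\tilde{\vx}_{s-1})\|^2$ componentwise via the standard $L$-smooth convex gradient-difference inequality (the paper's Lemma \ref{lem:lip}) and average over $i$. Your inline justification of that inequality via the descent lemma applied to $\phi$ matches the paper's auxiliary proof as well.
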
%

\vspace{-0.1cm}

Then by combining Lemmas \ref{lem:lower} and \ref{lem:vr}, we will find that the inner product in \eqref{eq:lower-1} and \eqref{eq:vr-1} can be canceled with each other in expectation.
Therefore after combining  Lemmas \ref{lem:lower} and \ref{lem:vr},  telescoping the resulting inequality from $k=1$ to $m$ and using the definition $\psi_{s+1, 0}:=\psi_{s, m}$, we have Lemma \ref{lem:recursion}. %
\begin{lemma}[Recursion]\label{lem:recursion}
$\forall s\ge 2,$ taking expectation on the randomness over the epoch $s$, it follows that $\E[\psi_{s+1, 0}(\vz_{s+1, 0})]
\ge \E\Big[ \psi_{s, 0}(\vz_{s, 0}) + mA_sf(\tilde{\vx}_{s}) - mA_{s-1}f(\tilde{\vx}_{s-1}) \Big].$
\end{lemma}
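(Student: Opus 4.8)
The plan is to feed the one-step estimate of Lemma~\ref{lem:lower} into the variance bound of Lemma~\ref{lem:vr}, observe that the stray inner-product terms cancel, telescope the resulting recursion over the $m$ inner iterations of epoch $s$, and finish with Jensen's inequality applied to the uniform average in Step~14 of Algorithm~\ref{alg:vr-ada}.

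First I would fix $s\ge 2$ and $k\in\{1,\ldots,m\}$ and take the expectation in \eqref{eq:lower-1} conditional on all randomness generated before the index $i$ is drawn in the $k$-th inner iteration; call this $\gF_{s,k-1}$, so that $\vy_{s,k}$, $\tilde{\vx}_{s-1}$, $\vz_{s,k-1}$ and $\psi_{s,k-1}$ are all $\gF_{s,k-1}$-measurable. Unbiasedness of $\tilde{\nabla}_{s,k}$ given $\gF_{s,k-1}$ turns $-A_{s-1}\langle\tilde{\nabla}_{s,k},\tilde{\vx}_{s-1}-\vy_{s,k}\rangle$ into $-A_{s-1}\langle\nabla g(\vy_{s,k}),\tilde{\vx}_{s-1}-\vy_{s,k}\rangle$, and the variance term becomes a conditional expectation to which Lemma~\ref{lem:vr} applies:
\[ -\tfrac{A_{s-1}}{2L}\,\E\big[\,\|\tilde{\nabla}_{s,k}-\nabla g(\vy_{s,k})\|^2\,\big|\,\gF_{s,k-1}\,\big] \ \ge\ -A_{s-1}\big(g(\tilde{\vx}_{s-1})-g(\vy_{s,k})-\langle\nabla g(\vy_{s,k}),\,\tilde{\vx}_{s-1}-\vy_{s,k}\rangle\big). \]
The key point is that the inner-product term produced on the right is exactly the negative of the one left over from Lemma~\ref{lem:lower}, so the two cancel; the $g(\vy_{s,k})$ coefficients then collapse through $a_s-A_s+A_{s-1}=0$, and writing $f=g+l$ leaves the clean recursion
\[ \E\big[\,\psi_{s,k}(\vz_{s,k})\,\big|\,\gF_{s,k-1}\,\big] \ \ge\ \psi_{s,k-1}(\vz_{s,k-1})+A_s\,f(\vy_{s,k+1})-A_{s-1}\,f(\tilde{\vx}_{s-1}), \]
where $\vy_{s,k+1}:=\tfrac{A_{s-1}}{A_s}\tilde{\vx}_{s-1}+\tfrac{a_s}{A_s}\vz_{s,k}$ simply extends the formula in Step~9 of Algorithm~\ref{alg:vr-ada} to $k=m$.

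Next I would take total expectation over the epoch and telescope this inequality from $k=1$ to $k=m$; the $\psi$-terms collapse, leaving $\E[\psi_{s,m}(\vz_{s,m})]\ge \E[\psi_{s,0}(\vz_{s,0})]+A_s\sum_{k=1}^m\E[f(\vy_{s,k+1})]-mA_{s-1}\E[f(\tilde{\vx}_{s-1})]$. The final ingredient is the identity $\tilde{\vx}_s=\tfrac1m\sum_{k=1}^m\vy_{s,k+1}$, which holds precisely because Step~14 uses the \emph{uniform} average of the $\vz_{s,k}$; convexity of $f$ then gives $\sum_{k=1}^m f(\vy_{s,k+1})\ge m\,f(\tilde{\vx}_s)$, and substituting this together with $\psi_{s+1,0}=\psi_{s,m}$, $\vz_{s+1,0}=\vz_{s,m}$ yields the claimed bound.

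The algebraic bookkeeping (tracking the $g(\vy_{s,k})$ coefficients, the telescoping) is routine. The step I would be most careful about is setting up the conditioning so that unbiasedness legitimately linearizes the inner product and the inner-product contributions of Lemmas~\ref{lem:lower} and~\ref{lem:vr} cancel \emph{exactly}; a close second is the off-by-one alignment of $\vy_{s,k+1}$ that makes the plain uniform average reproduce $\tilde{\vx}_s$ — this alignment is exactly what lets the same argument cover both the general convex ($\sigma=0$) and strongly convex ($\sigma>0$) settings without resorting to a weighted average.
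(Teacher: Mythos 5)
Your proposal is correct and follows essentially the same route as the paper's own proof: condition and linearize the inner product via unbiasedness, insert Lemma~\ref{lem:vr} into the variance term of Lemma~\ref{lem:lower} so the inner products cancel and the $g(\vy_{s,k})$ coefficients collapse via $a_s - A_s + A_{s-1}=0$, telescope over $k=1,\dots,m$, and finish with convexity applied to $\tilde{\vx}_s=\frac1m\sum_{k=1}^m\vy_{s,k+1}$. The points you flag as delicate (the exact cancellation and the $\vy_{s,k+1}$ alignment with the uniform average) are precisely the steps the paper relies on.
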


Besides the lower bound in Lemma \ref{lem:recursion}, by the convexity of $f(\vx)$ and optimality of $\vz_{s, m}$,  we can also provide the upper bound  of $\psi_{s,m}(\vz_{s, m})$ in Lemma \ref{lem:upper}. %
\begin{lemma}[Upper bound]\label{lem:upper}
 $\forall s\ge 2,$ taking expectation on all the history, we have%
\begin{eqnarray}
\E[\psi_{s,m}(\vz_{s, m})] \le  m A_s f(\vx^*) + \frac{m}{2}\| \tilde{\vx}_0 - \vx^*\|^2. 
\end{eqnarray}
\end{lemma}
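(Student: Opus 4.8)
The plan is to use the defining property of $\vz_{s,m}$ as the minimizer of $\psi_{s,m}$, namely $\psi_{s,m}(\vz_{s,m})\le\psi_{s,m}(\vx^*)$, and then bound $\psi_{s,m}(\vx^*)$ directly by unrolling the recursion \eqref{eq:ges} together with the initialization in Steps 2--4. First I would observe that, since the inner weight $a_s$ is constant over $k$, summing \eqref{eq:ges} from $k=1$ to $m$ gives $\psi_{s,m}(\vz)=\psi_{s,0}(\vz)+a_s\sum_{k=1}^m\big(g(\vy_{s,k})+\langle\tilde{\nabla}_{s,k},\vz-\vy_{s,k}\rangle+l(\vz)\big)$; then using $\psi_{s,0}=\psi_{s-1,m}$ for $s\ge 3$ and $\psi_{2,0}=m\psi_{1,1}$, an immediate induction yields, for every $s\ge 2$,
\[
\psi_{s,m}(\vz)=m\,\psi_{1,1}(\vz)+\sum_{t=2}^{s}a_t\sum_{k=1}^{m}\Big(g(\vy_{t,k})+\langle\tilde{\nabla}_{t,k},\vz-\vy_{t,k}\rangle+l(\vz)\Big).
\]

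Next I would bound the base term $m\,\psi_{1,1}(\vx^*)$. The point here is that Step 3 uses the \emph{exact} full gradient $\nabla g(\vy_{1,1})$, so $\psi_{1,1}$ is deterministic, and the subgradient inequality for the convex function $g$ gives $g(\vy_{1,1})+\langle\nabla g(\vy_{1,1}),\vx^*-\vy_{1,1}\rangle\le g(\vx^*)$. Since $\psi_{1,0}(\vx^*)=\tfrac12\|\tilde{\vx}_0-\vx^*\|^2$ and $A_1=a_1$, this yields $\psi_{1,1}(\vx^*)\le\tfrac12\|\tilde{\vx}_0-\vx^*\|^2+a_1\big(g(\vx^*)+l(\vx^*)\big)=\tfrac12\|\tilde{\vx}_0-\vx^*\|^2+A_1 f(\vx^*)$.

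Then I would handle the double sum in expectation. For a fixed inner iteration $(t,k)$ with $t\ge 2$, the iterate $\vy_{t,k}$ (hence $g(\vy_{t,k})$) is measurable with respect to the history before the sample $i$ is drawn at that iteration, while $\E[\tilde{\nabla}_{t,k}\mid\text{that history}]=\nabla g(\vy_{t,k})$ by the construction in Step 11. Taking this conditional expectation and then applying convexity of $g$ pointwise gives $\E\big[g(\vy_{t,k})+\langle\tilde{\nabla}_{t,k},\vx^*-\vy_{t,k}\rangle+l(\vx^*)\big]\le\E\big[g(\vx^*)+l(\vx^*)\big]=f(\vx^*)$. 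Summing over the $m$ inner iterations and over $t=2,\dots,s$ and using $\sum_{t=2}^{s}a_t=A_s-A_1$, the double sum contributes at most $m(A_s-A_1)f(\vx^*)$ in expectation. Combining with the base term gives $\E[\psi_{s,m}(\vx^*)]\le\tfrac m2\|\tilde{\vx}_0-\vx^*\|^2+mA_sf(\vx^*)$, and since $\psi_{s,m}(\vz_{s,m})\le\psi_{s,m}(\vx^*)$ pointwise, taking expectations concludes the proof.

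The only delicate part is the bookkeeping in the conditional-expectation/tower-property step for the double sum — keeping straight that $\vy_{t,k}$ and $g(\vy_{t,k})$ are history-measurable while only $\tilde{\nabla}_{t,k}$ carries the fresh randomness — together with noting that it is precisely the novel initialization $\psi_{2,0}=m\psi_{1,1}$ that produces the factor $m$ in both the $\|\cdot\|^2$ term and the $A_1f(\vx^*)$ piece, so that $mA_1f(\vx^*)$ merges cleanly with $m(A_s-A_1)f(\vx^*)$ into $mA_sf(\vx^*)$. No smoothness, strong convexity, or growth bound on $A_s$ is needed; everything reduces to the elementary gradient inequality for convex $g$ and unbiasedness of the variance-reduced gradient.
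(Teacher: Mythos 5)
Your proposal is correct and follows essentially the same route as the paper's proof: both reduce to bounding $\E[\psi_{s,m}(\vx^*)]$ via the optimality of $\vz_{s,m}$, bound each stochastic increment by $a_s f(\vx^*)$ using unbiasedness of $\tilde{\nabla}_{s,k}$ plus convexity of $g$, and use the initialization $\psi_{2,0}=m\psi_{1,1}$ to produce the factor $m$ on both the quadratic term and the $A_1 f(\vx^*)$ piece. The only difference is presentational — you unroll the recursion into one explicit double sum and apply the tower property termwise, whereas the paper telescopes first over $k$ within an epoch and then over epochs with a separate case for $s=2$.
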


\vspace{-0.2cm}

Finally, by combining Lemmas \ref{lem:s-1}, \ref{lem:recursion} and \ref{lem:upper}, we prove  Theorem \ref{thm:result} as follows. 

\vspace{-0.2cm}

\paragraph{Proof of  Theorem \ref{thm:result}.}\label{sec:proof-thm}
\begin{proof}
Taking expectation on the randomness of all the history and telescoping Lemma \ref{lem:recursion} from $2$ to $s (s\ge 2)$, we have 
\begin{eqnarray}
\E[\psi_{s+1, 0}(\vz_{s+1, 0}) - \psi_{2, 0}(\vz_{2, 0})  ]
&\ge& 
\E\Big[ mA_{s}  f(\tilde{\vx}_{s}) -  mA_1 f(\tilde{\vx}_{1})\Big],\label{eq:s-2}
\end{eqnarray}
where $ mA_1 f(\tilde{\vx}_{1})$ can be viewed as ``accumulated randomized errors'' in the main loop. It turns out that it will be cancelled by Lemma \ref{lem:s-1} and the setting $\vz_{2,0} = \vz_{1,1}, \psi_{2, 0} = m \psi_{1, 1} $ for our initialization steps as follows. 
\begin{eqnarray}
\psi_{2, 0}(\vz_{2,0}) = m \psi_{1, 1}(\vz_{2,0}) =  m \psi_{1, 1}(\vz_{1,1}) \ge m A_1 f(\tilde{\vx}_{1}). \label{eq:s-1}
\end{eqnarray}
So combining \eqref{eq:s-2} and \eqref{eq:s-1}, and by the setting $\psi_{s+1, 0}(\vz_{s+1, 0}) =\psi_{s, m}(\vz_{s, m}) (s\ge 2)$, we have 
\begin{eqnarray}
\E[\psi_{s, m}(\vz_{s, m})]=\E[\psi_{s+1, 0}(\vz_{s+1, 0})] \ge  \E[ mA_{s}  f(\tilde{\vx}_{s})]. \label{eq:s-3}
\end{eqnarray}
Then combining Lemma \ref{lem:upper} and \eqref{eq:s-3}, we have 
\begin{eqnarray}
\E[mA_{s}  f(\tilde{\vx}_{s})] \le \E[\psi_{s, m}(\vz_{s, m})] \le  m A_s  f(\vx^*) + \frac{m}{2}\|\tilde{\vx}_0 - \vx^*\|^2. \label{eq:proof-1}
\end{eqnarray}
So after simple rearrangement of \eqref{eq:proof-1}, we obtain \eqref{eq:result}.  Then by the proof of Section \ref{sec:lower-As}, we obtain \eqref{eq:As1} and \eqref{eq:As2}.
\end{proof}

\vspace{-1cm}

\section{Experiments}

\vspace{-0.2cm}

\begin{figure}[t]
  \psfrag{Log Loss}[bc]{}
  \psfrag{Number of Passes}[tc]{}
\begin{tabular}{c|ccc}
$\lambda$ & $0$ & $10^{-8}$ & $10^{-4}$ \\\hline
&&&\\
a9a&
\raisebox{-.5\height}{\includegraphics[width = 0.25\textwidth]{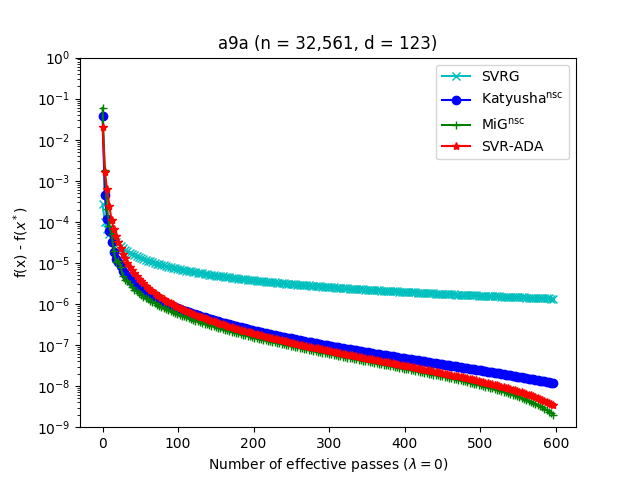}}
&
\raisebox{-.5\height}{\includegraphics[width = 0.25\textwidth]{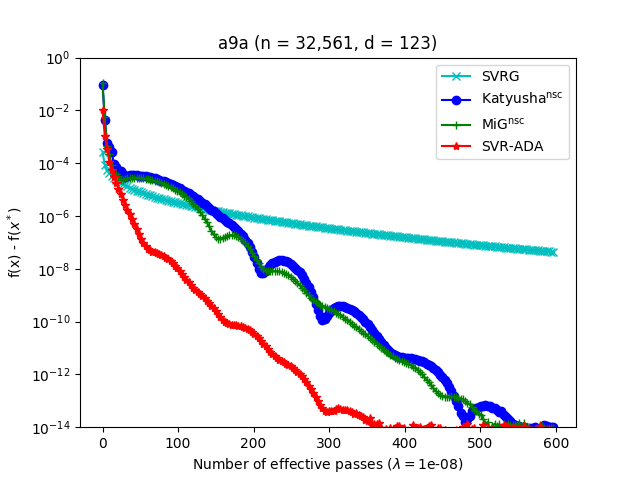}}  &
\raisebox{-.5\height}{\includegraphics[width = 0.25\textwidth]{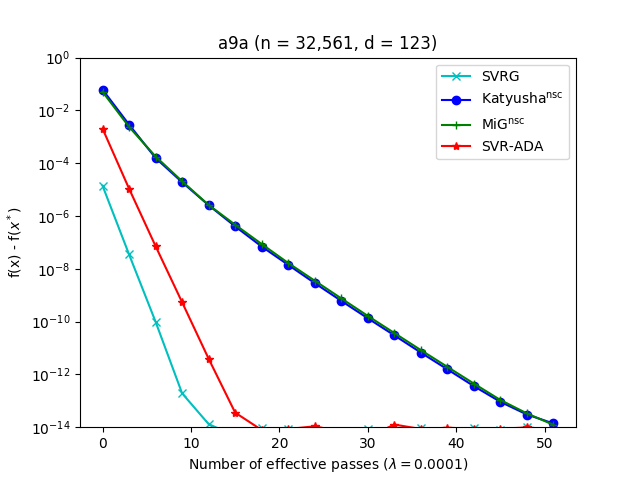}}  \\
covtype &
\raisebox{-.5\height}{\includegraphics[width = 0.25\textwidth]{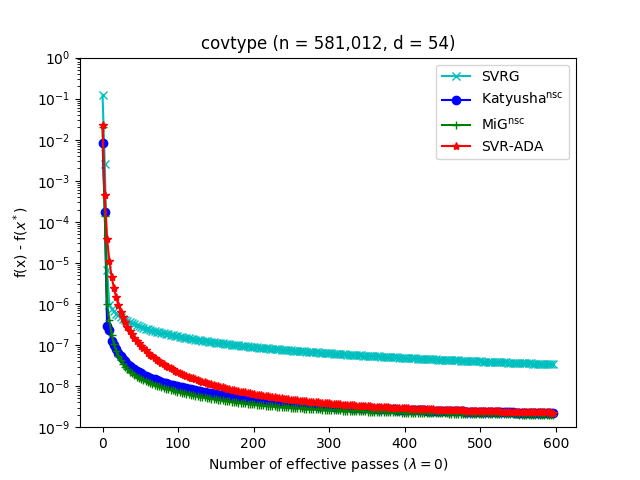}}
&
\raisebox{-.5\height}{\includegraphics[width = 0.25\textwidth]{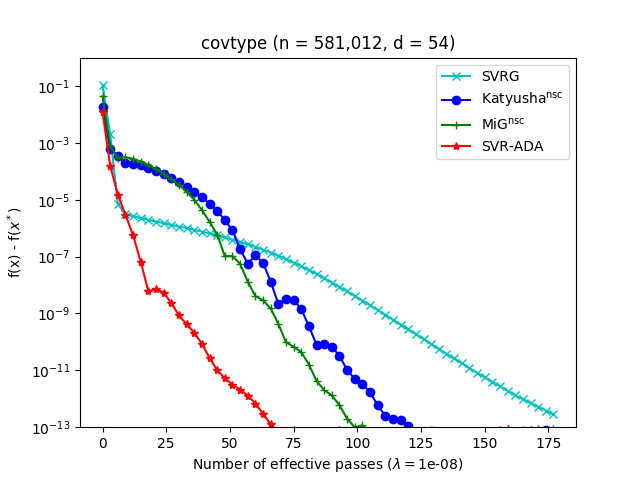}} &
\raisebox{-.5\height}{\includegraphics[width = 0.25\textwidth]{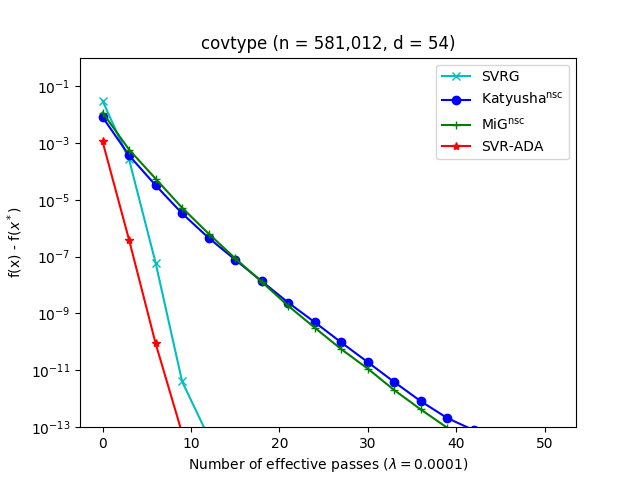}}  \\
\end{tabular}
\caption{Comparing VRADA with SVRG, Katyusha and MiG on $\ell_2$-norm regularized logistic regression problems. 
The horizontal axis is the number of passes through the entire dataset, and the vertical axis is the optimality gap $f(\vx) - f(\vx^*)$. }
\label{fig:result}
\end{figure}
In this section, to verify the theoretical results and show the empirical performance of the proposed VRADA method, we conduct numerical experiments on large-scale datasets in machine learning. The datasets we use are a9a and covtype, downloaded from the LibSVM website\footnote{The dataset url is \url{https://www.csie.ntu.edu.tw/~cjlin/libsvmtools/datasets/}.}. To make comparison easier, we normalize the Euclidean norm of each data vector in the datasets to be  $1$. The problem we study is the \emph{$\ell_2$-norm regularized logistic regression} problem with regularization parameter $\lambda\in\{0, 10^{-8}, 10^{-4}\}$. For $\lambda=0,$ the corresponding problem is unregularized and thus general convex. For this setting, we compare VRADA with the state-of-the-art variance reduction methods SVRG \cite{johnson2013accelerating}, Katyusha$^{\rm nsc}$ \cite{allen2017katyusha}, and MiG$^{\rm nsc}$ \cite{zhou2018simple}.  The settings $\lambda=10^{-8}$ and $\lambda=10^{-4}$ correspond to the strongly convex settings with a large condition number and a small one, respectively. For both settings, we compare VRADA with SVRG, Katyusha$^{\rm sc}$ and MiG$^{\rm sc}$.

All four algorithms we compare have a similar outer-inner structure, where we set all the number of iterations as $m=2n$. For these algorithms, the common parameter to tune is the parameter  \emph{w.r.t.} Lipschitz constant. The details of parameter tuning can be found in Section \ref{sec:experiment-appendix} of the supplementary material. 
Our results are given in Figure \ref{fig:result}. Following the tradition of ERM experiments, we use the number of “passes” of the entire dataset as the x-axis.

In Figure  \ref{fig:result}, when $\lambda=0$,  VRADA decreases the error promptly in the initial stage, which validates our theoretical result in attaining an $O(1/n)$-accurate solution with $\log\log n$ passes of the entire dataset. An interesting phenomenon is that the other variance reduction methods share the same behavior with VRADA in empirical evaluations (in fact, MiG$^{\rm nsc}$ is slightly faster for both a9a and covtype datasets). This poses an open problem whether or not this superlinear phenomenon in the initial stage can be theoretically justified for SVRG, Katyusha, and MiG. 

In Figure  \ref{fig:result}, when $\lambda=10^{-8}$,  \emph{i.e.,} the large condition number setting, VRADA has significantly better performance than Katyusha$^{\rm sc}$ and MiG$^{\rm sc}$. This is partly due to the fact that the accumulation of strong convexity by dual averaging helps us better cancel the error from the randomness and allows VRADA to tune a more aggressive parameter about Lipschitz constant. When $\lambda=10^{-4}$, \emph{i.e.,} the small condition number setting, VRADA is significantly better than Katyusha$^{\rm sc}$ and MiG$^{\rm sc}$, which validates our superior theoretical results in \eqref{eq:n-gg-kappa}. Meanwhile, when $\lambda=10^{-4}$, SVRG can be competitive with VRADA, which partly verifies the theoretical results for SVRG  \cite{hannah2018breaking}.  

In summary, the existing methods only perform well for the above one or two regimes, while VRADA performs well for all the three regimes: general convex, strongly convex with a large condition number and strongly convex with a small condition number, which is consistent with our theoretical results (see Table \ref{tb:result2}).

\vspace{-0.1in}

\section*{Acknowledgements}
Chaobing and Yi acknowledge support from Tsinghua-Berkeley Shenzhen Institute (TBSI) Research Fund. Yi also acknowledges support from ONR grant N00014-20-1-2002 and the joint Simons Foundation-NSF DMS grant \#2031899, as well as support from Berkeley AI Research (BAIR), Berkeley FHL Vive Center for Enhanced Reality and Berkeley Center for Augmented Cognition.

\section*{Broader Impact}
The finite-sum structure widely exists in statistical learning, operational research, and signal processing. This work successfully exploits the finite-sum structure to push the performance of this kind of problems in both theory and practice.  
The theoretical contribution helps us better understand this simple but effective structure, while the superior empirical performance shows potential applications of this work in all the related subjects.  It may benefit the broad academic and research community. There are no foreseeable negative or biased consequences.

\bibliographystyle{plain}
\bibliography{main.bbl}

\clearpage
\appendix

\section{Proof of Lemma \ref{lem:s-1}}\label{sec:lem:s-1}
\begin{proof}
It follows that 
\begin{eqnarray}
\psi_{1,1}(\vz_{1,1}) &\overset{(a)}{=}&  \psi_{1, 0}(\vz_{1,1}) +a_1(g(\vy_{1,1}) + \langle \nabla g(\vy_{1,1}), \vz_{1,1} - \vy_{1,1}\rangle + l(\vz_{1,1}))\nonumber \\
&\overset{(b)}{=}&\frac{1}{2}\|\vz_{1,1}- \vz_{1, 0}\|^2+a_1(g(\vy_{1,1}) + \langle \nabla g(\vy_{1,1}), \vz_{1,1} - \vy_{1,1}\rangle + l(\vz_{1,1}))\nonumber \\
&\overset{(c)}{=}&a_1\Big(g(\vy_{1,1}) + \langle \nabla g(\vy_{1,1}), \vz_{1,1} - \vy_{1,1}\rangle  + \frac{1}{2a_1}\|\vz_{1,1}- \vy_{1,1}\|^2+ l(\vz_{1,1})\Big)\nonumber\\
&\overset{(d)}{=}&a_1\Big(g(\vy_{1,1}) + \langle \nabla g(\vy_{1,1}), \vz_{1,1} - \vy_{1,1}\rangle + \frac{L}{2}\|\vz_{1,1}- \vy_{1,1}\|^2+ l(\vz_{1,1})\Big)\nonumber\\
&\overset{(e)}{\ge}&a_1(g(\vz_{1,1})+ l(\vz_{1,1}))\nonumber\\
&\overset{(f)}{=}&A_1f(\tilde{\vx}_{1}),\nonumber
\end{eqnarray}
where $(a)$ is by definition of $\psi_{1,1}$, $(b)$ is by the definition of $\psi_{1,0}$ and $\vz_{1,0} = \tilde{\vx}_0$ , $(c)$ is by the setting $\vy_{1,1} = \vz_{1,0}$ and simple rearrangement , $(d)$ is by the setting $a_1 = \frac{1}{L},$   $(e)$ is by Lemma 
\ref{lem:lip}, and $(f)$ is by the setting $A_1= a_1$ and $\tilde{\vx}_1 = \vz_{1,1}.$
\end{proof}

\section{Proof of Lemma \ref{lem:lower}}\label{sec:lem:lower}
\begin{proof}
As $l(\vz)$ is $\sigma$-strongly convex, by the definition of the sequence $\{\psi_{s,k}(\vz)\}$, $\psi_{s-1, m}(\vz)$ is $m+\sigma m\sum_{i=1}^{s-1} a_i = m(1 + \sigma  A_{s-1})$-strongly convex. Furthermore, we also know that  $\psi_{s, k}(\vz) (k\ge 0)$ is also at least $ m(1 + \sigma  A_{s-1})$-strongly convex. So it follows that: $\forall k\ge 1,$ 
\begin{eqnarray}
\psi_{s,k}(\vz_{s,k}) &\overset{(a)}{=}&  \psi_{s, k-1}(\vz_{s,k}) +a_s(g(\vy_{s,k}) + \langle \tilde{\nabla}_{s,k}, \vz_{s,k} - \vy_{s,k}\rangle + l(\vz_{s,k}))\nonumber \\
&\overset{(b)}{\ge}& \psi_{s,k-1}(\vz_{s, k-1}) + \frac{m(1+\sigma  A_{s-1})}{2}\|\vz_{s,k}- \vz_{s, k-1}\|^2 \nonumber \\
&&+a_s(g(\vy_{s,k}) + \langle \tilde{\nabla}_{s,k}, \vz_{s,k} - \vy_{s,k}\rangle + l(\vz_{s,k})),\label{eq:psi}
\end{eqnarray}
where $(a)$ is by the definition of $\psi_{s,k}$ and $(b)$ is by the optimality condition of $\vz_{s, k-1}$ and the $m(1 + \sigma  A_{s-1})$-strong convexity of  $\psi_{s,k-1}.$ Then we have 
\begin{eqnarray}
&&a_s(g(\vy_{s,k}) + \langle \tilde{\nabla}_{s,k}, \vz_{s,k} - \vy_{s,k}\rangle + l(\vz_{s,k}))  \nonumber\\
&\overset{(a)}{=}&a_sg(\vy_{s,k}) +  A_s  \Big\langle \tilde{\nabla}_{s,k}   , \frac{a_s}{A_s}\vz_{s,k} - \vy_{s,k} +\frac{A_{s-1}}{A_s}\tilde{\vx}_{s-1}\Big\rangle \nonumber\\
&&- A_{s-1}\langle \tilde{\nabla}_{s,k}   ,  \tilde{\vx}_{s-1} - \vy_{s,k}\rangle + a_s l(\vz_{s,k})  \nonumber\\
&\overset{(b)}{\ge}& a_sg(\vy_{s,k}) + A_s\Big\langle \tilde{\nabla}_{s,k}   , \vy_{s,k+1} - \vy_{s,k}\Big\rangle 
- A_{s-1}\langle \tilde{\nabla}_{s,k}   ,  \tilde{\vx}_{s-1} - \vy_{s,k}\rangle  \nonumber\\
&&+ A_s l(\vy_{s,k+1}) - A_{s-1}l(\tilde{\vx}_{s-1}), \label{eq:g-l}
\end{eqnarray}
where $(a)$ is by the fact that $A_s = A_{s-1} + a_s$ and simple rearrangement and $(b)$ is by
 $\vy_{s,k+1}=  \frac{A_{s-1}}{A_s}\tilde{\vx}_{s-1}+\frac{a_s}{A_s}\vz_{s,k}$ (which is by our definition of the sequence $\{\vy_{s,k}\}$) and the convexity of $l(\vz)$.

Meanwhile, by our setting in Step 5 of Algorithm \ref{alg:vr-ada}, $A_s = A_{s-1}+\sqrt{ \frac{mA_{s-1}(1+\sigma A_{s-1})}{2L}}$ and also $a_s = A_{s} - A_{s-1}$, we have 
\begin{eqnarray}
\frac{mA_s(1 + \sigma A_{s-1})}{a_s^2} =\frac{2A_s}{A_{s-1}}L   \ge \Big(1+ \frac{A_s}{A_{s-1}} \Big)L. \label{eq:A}
\end{eqnarray}

Then by combining \eqref{eq:psi} and \eqref{eq:g-l}, it follows that 
\begin{eqnarray}
&&\psi_{s,k}(\vz_{s,k}) - \psi_{s,k-1}(\vz_{s, k-1})\nonumber\\ 
&\ge& a_sg(\vy_{s,k}) + A_s \Big\langle \tilde{\nabla}_{s,k},\vy_{s,k+1} - \vy_{s,k}\Big\rangle 
- A_{s-1}\langle \tilde{\nabla}_{s,k}    ,  \tilde{\vx}_{s-1} - \vy_{s,k}\rangle\nonumber\\
&&+ A_s l(\vy_{s,k+1}) - A_{s-1}l(\tilde{\vx}_{s-1})+\frac{m(1+\sigma  A_{s-1})}{2}\|\vz_{s,k} - {\vz}_{s,k-1}\|^2\nonumber\\
&\overset{(a)}{=}& 
 a_sg(\vy_{s,k}) + A_s \Big\langle \tilde{\nabla}_{s,k},\vy_{s,k+1} - \vy_{s,k}\Big\rangle 
- A_{s-1}\langle \tilde{\nabla}_{s,k}    ,  \tilde{\vx}_{s-1} - \vy_{s,k}\rangle \nonumber\\
&&+ A_s l(\vy_{s,k+1}) - A_{s-1}l(\tilde{\vx}_{s-1}) 
+\frac{mA_s^2(1+\sigma A_{s-1})}{2a_s^2}\|\vy_{s,k+1} - {\vy}_{s,k}\|^2\nonumber\\
&\overset{(b)}{\ge}&a_sg(\vy_{s,k}) + A_s\bigg(\Big\langle \tilde{\nabla}_{s,k}, \vy_{s,k+1} - \vy_{s,k}\Big\rangle  \nonumber\\
&&\quad\quad\quad + \Big(1+ \frac{A_s}{A_{s-1}} \Big)\frac{L}{2}\|\vy_{s,k+1} - {\vy}_{s,k}\|^2 + l(\vy_{s,k+1})  \bigg)  \nonumber\\
&&- A_{s-1}\langle \tilde{\nabla}_{s,k},  \tilde{\vx}_{s-1} - \vy_{s,k}\rangle -A_{s-1}l(\tilde{\vx}_{s-1}),    \nonumber
\end{eqnarray}
where $(a)$ is by the fact $\vy_{s,k+1} - {\vy}_{s,k} = \frac{a_s}{A_s}(\vz_{s,k} - \vz_{s, k-1})$ and $(b)$ is by \eqref{eq:A}. Then
we have 
\begin{eqnarray}
&& \Big\langle\tilde{\nabla}_{s,k}, \vy_{s,k+1} - \vy_{s,k}\Big\rangle + \Big(1+\frac{A_s}{A_{s-1}}\Big) \frac{L}{2}\|\vy_{s,k+1} - {\vy}_{s,k}\|^2  + l(\vy_{s,k+1})   \nonumber\\
&{=}& \Big\langle\nabla g(\vy_{s,k}), \vy_{s,k+1} - \vy_{s,k}\Big\rangle +
\frac{L}{2}\|\vy_{s,k+1} - {\vy}_{s,k}\|^2  + l(\vy_{s,k+1})  \nonumber\\
&&+\Big\langle \tilde{\nabla}_{s,k}-\nabla g(\vy_{s,k}), \vy_{s,k+1} - \vy_{s,k}\Big\rangle +\frac{A_sL}{2A_{s-1}}\|\vy_{s,k+1} - {\vy}_{s,k}\|^2 \nonumber\\
&\overset{(a)}{\ge}& g(\vy_{s,k+1}) -  g(\vy_{s,k}) + l(\vy_{s,k+1})  - \frac{A_{s-1}}{2A_sL}\|\tilde{\nabla}_{s,k}-\nabla g(\vy_{s,k})\|^2\nonumber\\
&=&f(\vy_{s,k+1}) -  g(\vy_{s,k}) - \frac{A_{s-1}}{2A_sL}\|\tilde{\nabla}_{s,k}-\nabla g(\vy_{s,k})\|^2
\end{eqnarray}
where $(a)$ is by Lemma \ref{lem:lip} and the Young's inequality $\langle \va, \vb\rangle\ge -\frac{1}{2}\|\va\|^2 - \frac{1}{2}\|\vb\|^2.$ 
So we have
\begin{eqnarray}
&&\psi_{s,k}(\vz_{s,k}) - \psi_{s,k-1}(\vz_{s, k-1})\nonumber\\ 
&\ge& a_sg(\vy_{s,k}) + A_s\Big(f(\vy_{s,k+1}) -  g(\vy_{s,k}) - \frac{A_{s-1}}{2A_sL}\|\tilde{\nabla}_{s,k}-\nabla g(\vy_{s,k})\|^2 \Big)  \nonumber\\
&&- A_{s-1}\langle \tilde{\nabla}_{s,k},  \tilde{\vx}_{s-1} - \vy_{s,k}\rangle -A_{s-1}l(\tilde{\vx}_{s-1}).   
\end{eqnarray}
\end{proof}

\section{Proof of Lemma \ref{lem:vr}}\label{sec:lem:vr}

\begin{proof}
Taking expectation on the randomness over the choice of $i$, we have 
\begin{eqnarray}
\E[\|\tilde{\nabla}_{s,k}-\nabla g(\vy_{s,k})\|^2] &=& \E[\|\nabla g_i(\vy_{s, k}) - \nabla g_i(\tilde{\vx}_{s-1}) + {\bm \mu}_{s-1}-\nabla g(\vy_{s,k})\|^2]\nonumber\\
&=&\E[\|\nabla g_i(\vy_{s, k}) - \nabla g_i(\tilde{\vx}_{s-1}) +\nabla g(\tilde{\vx}_{s-1}) -\nabla g(\vy_{s,k})\|^2]\nonumber\\
&=& \E[\|\nabla g_i(\vy_{s, k}) - \nabla g_i(\tilde{\vx}_{s-1})\|^2]-\|\nabla g(\tilde{\vx}_{s-1}) -\nabla g(\vy_{s,k})\|^2\nonumber\\
&\le& \E[\|\nabla g_i(\vy_{s, k}) - \nabla g_i(\tilde{\vx}_{s-1})\|^2]\nonumber\\
&\overset{(a)}{\le}& \E[2L( g_i(\tilde{\vx}_{s-1}) -g_i(\vy_{s, k}) - \langle\nabla g_i(\vy_{s,k}), \tilde{\vx}_{s-1} - \vy_{s, k}\rangle)]\nonumber\\
&=&2L( g(\tilde{\vx}_{s-1}) -g(\vy_{s, k}) - \langle\nabla g(\vy_{s,k}), \tilde{\vx}_{s-1} - \vy_{s, k}\rangle),\nonumber
\end{eqnarray}
where $(a)$ is by Lemma \ref{lem:lip}. 
\end{proof}
\section{Proof of Lemma \ref{lem:recursion}}\label{sec:lem:recursion}

\begin{proof}
By Lemma \ref{lem:lower} and taking expectation on the randomness over the choice of $i$, we have 
\begin{eqnarray}
&&\E[\psi_{s,k}(\vz_{s,k}) - \psi_{s,k-1}(\vz_{s, k-1})]\nonumber\\
&\ge&\E\Big[a_sg(\vy_{s,k}) + A_s\Big(f(\vy_{s,k+1}) -  g(\vy_{s,k}) - \frac{A_{s-1}}{2A_sL}\|\tilde{\nabla}_{s,k}-\nabla g(\vy_{s,k})\|^2 \Big)  \nonumber\\
&&- A_{s-1}\langle \tilde{\nabla}_{s,k},  \tilde{\vx}_{s-1} - \vy_{s,k}\rangle -A_{s-1}l(\tilde{\vx}_{s-1})\Big] \nonumber\\
&\overset{(a)}{\ge}&\E\Big[  a_s g(\vy_{s,k})\nonumber\\
&&\quad +
A_s(f(\vy_{s,k+1}) - g(\vy_{s,k})) - A_{s-1}(g(\tilde{\vx}_{s-1}) - g(\vy_{s,k})-  \langle \nabla g(\vy_{s,k}), \tilde{\vx}_{s-1} - \vy_{s,k}\rangle) \nonumber\\
&&\quad - A_{s-1}\langle \tilde{\nabla}_{s,k},  \tilde{\vx}_{s-1} - \vy_{s,k}\rangle-A_{s-1}l(\tilde{\vx}_{s-1})\Big]\nonumber\\
&\overset{(b)}{=}&\E[A_sf(\vy_{s, k+1}  )] - A_{s-1}f(\tilde{\vx}_{s-1}),\label{eq:one}
\end{eqnarray}
where $(a)$ is by Lemma \ref{lem:vr}, and $(b)$ is by $\E[\tilde{\nabla}_{s,k} ]= \nabla g(\vy_{s,k})$ $,A_{s}= A_{s-1} + a_s$ and $f(\vx) = g(\vx)+ l(\vx).$

Summing \eqref{eq:one} from $k=1$ to $m$, by the setting for $s\ge 2,$ $ \psi_{s+1,0}:=\psi_{s,m}$ and $ \vz_{s+1, 0}:=\vz_{s,m} $, we have 
\begin{eqnarray}
\E[\psi_{s+1, 0}(\vz_{s+1, 0}) -  \psi_{s, 0}(\vz_{s, 0})]
&=&
\E[\psi_{s, m}(\vz_{s, m}) -  \psi_{s, 0}(\vz_{s, 0})]\nonumber\\
&\ge& \E\bigg[A_s\sum_{k=1}^mf(\vy_{s,k+1}) - mA_{s-1}f(\tilde{\vx}_{s-1}) \bigg]\nonumber\\
&\overset{(a)}{\ge}& \E\Big[mA_sf(\tilde{\vx}_{s}) - mA_{s-1}f(\tilde{\vx}_{s-1}) \Big],\label{eq:two}
\end{eqnarray}
where $(a)$ is by the convexity of $f(\vz)$ and the fact  of 
$\tilde{\vx}_{s} =\frac{1}{m}\sum_{k=1}^m \vy_{s,k+1}$ (which is in turn by the definition of $\tilde{\vx}_{s}=\frac{A_{s-1}}{A_s}\tilde{\vx}_{s-1} +  \frac{a_{s}}{mA_s}\sum_{k=1}^m \vz_{s, k}$ and the definition of $\vy_{s, k}.$)

\end{proof}

\section{Proof of Lemma \ref{lem:upper}}\label{sec:lem:upper}
\begin{proof}
$\forall s \ge 2$, taking expectation on the choice of $i$ in the  $k$-th iteration of the $s$-th epoch, we have $\forall \vz,$ 
\begin{eqnarray}
\E[\psi_{s,k}(\vz)]&=&\E[\psi_{s, k-1}(\vz) + a_s(g(\vy_{s, k}) + \langle \tilde{\nabla}_{s,k}, \vz - \vy_{s, k}\rangle + l(\vz))]\nonumber\\
&\overset{(a)}{=}& \psi_{s, k-1}(\vz) + a_s(g(\vy_{s, k}) + \langle \nabla g(\vy_{s, k}), \vz - \vy_{s, k}\rangle + l(\vz))\nonumber\\
&\overset{(b)}{\le}&\psi_{s, k-1}(\vz) + a_s (g(\vz) + l(\vz))\nonumber\\
&=&\psi_{s, k-1}(\vz) + a_s f(\vz), \label{eq:upper-tele}
\end{eqnarray}
where $(a)$ is by the fact $\E[ \tilde{\nabla}_{s,k}] = \nabla g(\vy_{s, k}), $ and $(b)$ is by the convexity of $g(\vz)$. Then taking expectation from the randomness of the epoch $s$ and telescoping \eqref{eq:upper-tele} from $k=1$ to $m$, we have 
\begin{eqnarray}
\E[\psi_{s,m}(\vz)] &\le& \psi_{s, 0}(\vz) + m a_s f(\vz)\nonumber\\
&=&\begin{cases}
\psi_{s-1, m}(\vz) + m a_s f(\vz), & s\ge 3 \\ 
m\psi_{1, 1}(\vz) + m a_2 f(\vz), &  s = 2. \label{eq:upper-tele2}
\end{cases}
\end{eqnarray}
Then taking expectation from the randomness of all the history from $i=3$  and
telescoping \eqref{eq:upper-tele2} to some  $s\ge 3$, we have 
\begin{eqnarray}
\E[\psi_{s,m}(\vz)] &\le &  \psi_{2,m}(\vz)+ m \sum_{i=3}^s a_i f(\vz). \label{eq:upper-tele3}
\end{eqnarray}

Meanwhile taking expectation from the randomness of epoch $s=2$, we have 
\begin{eqnarray}
\E[\psi_{2, m}(\vz)] &\le& m\psi_{1, 1}(\vz) + m a_2 f(\vz)     \nonumber\\ 
&=&  m(\psi_{1, 0}(\vz) + a_1(g(\vy_{1, 1}) + \langle \nabla g(\vy_{1,1}), \vz - \vy_{1,1}\rangle + l(\vz)))    + m a_2 f(\vz)     \nonumber\\
&\overset{(a)}{\le}&m \Big( \frac{1}{2}\|\vz-\tilde{\vx}_0\|^2 + a_1 (g(\vz) + l(\vz))\Big)  + m a_2 f(\vz)  \nonumber\\
&{=}& m(a_1+a_2)f(\vz) +  \frac{m}{2}\|\vz-\tilde{\vx}_0\|^2,\label{eq:upper-tele4}
\end{eqnarray}
where $(a)$ is by the convexity of $g(\vz)$ and $\psi_{1, 0}(\vz) = \frac{1}{2}\|\vz-\tilde{\vx}_0\|^2$. 

So combining \eqref{eq:upper-tele3} and \eqref{eq:upper-tele4}, we have: $\forall s\ge 2,$
\begin{eqnarray}
\E[\psi_{s,m}(\vz)] &\le &    m \sum_{i=1}^s a_s f(\vz) + \frac{m}{2}\|\vz-\tilde{\vx}_0\|^2 \nonumber\\
&\overset{(a)}{=}& m A_s f(\vz) + \frac{m}{2}\|\vz-\tilde{\vx}_0\|^2, \label{eq:upp2}
\end{eqnarray}
where $(a)$ is by the our setting $a_s = A_s - A_{s-1}$ and $A_0=0.$

Then by \eqref{eq:upp2} and the optimality of $\vz_{s, m}$, we have $\psi_{s,m}(\vz_{s,m})\le \psi_{s,m}(\vx^*)$ and thus
\begin{eqnarray}
\E[\psi_{s,m}(\vz_{s,m})]\le\psi_{s,m}(\vx^*) \le   m A_s f(\vx^*) + \frac{m}{2}\|\vx^*-\tilde{\vx}_0\|^2.
\end{eqnarray}

\end{proof}

\section{The Lower Bounds for the $A_s$ in Theorem \ref{thm:result}}\label{sec:lower-As}

\begin{proof}
In the following, we give the lower bound of $A_s$ by the condition in Step 6 of Algorithm \ref{alg:vr-ada} and $A_1 = a_1 = \frac{1}{L}$.
To show the lower bound by the first term in \eqref{eq:As1}, we know that 
\begin{eqnarray}
A_s = A_{s-1}+\sqrt{ \frac{m A_{s-1}(1+\sigma  A_{s-1})}{2L}} \ge \sqrt{ \frac{m A_{s-1}(1+\sigma  A_{s-1})}{2L}} \ge \sqrt{ \frac{m A_{s-1}}{2L}},  
\end{eqnarray}
so we have 
\begin{eqnarray}
\frac{2L A_s}{m}\ge \Big(\frac{2L A_{s-1}}{m}\Big)^{\frac{1}{2}}\ge \Big(\frac{2L A_{1}}{m}\Big)^{{2^{-(s-1)}}}.
\end{eqnarray}

Then by the setting $A_1 = \frac{1}{L}$, we have
\begin{eqnarray}
A_s \ge \frac{m}{2L}\Big(\frac{2}{m}\Big)^{{2^{-(s-1)}}}. 
\end{eqnarray}

Meanwhile, for $s\ge 2,$ we also have 
\begin{eqnarray}
A_s &\ge& A_{s-1}+\sqrt{ \frac{m A_{s-1}(1+\sigma  A_{s-1})}{2L}} \ge
 A_{s-1}+ \sqrt{\frac{m\sigma}{2L}}A_{s-1} = \Big( 1 + \sqrt{\frac{m\sigma}{2L}}\Big)A_{s-1} \nonumber\\
& \ge & \Big( 1 + \sqrt{\frac{m\sigma}{2L}}\Big)^{s-1}A_{1}\nonumber\\
&=&\frac{1}{L} \Big(1 + \sqrt{\frac{m\sigma}{2L}}\Big)^{s-1}. 
\end{eqnarray}
Thus the lower bounds in \eqref{eq:As1} are proved. 

Then with $s_0 = 1+ \lceil \log_2\log_2 (m/2)\rceil,$ we have 
\begin{eqnarray}
A_{s_0} &\ge& \frac{m}{2L}\Big(\frac{2}{m}\Big)^{{2^{-(s_0-1)}}} \ge \frac{m}{2L}\Big(\frac{2}{m}\Big)^{{2^{-\lceil \log_2\log_2 (m/2)\rceil}}}\ge \frac{m}{2L}\Big(\frac{2}{m}\Big)^{{2^{- \log_2\log_2 (m/2)}}} \nonumber\\
&=& \frac{m}{4L}. \nonumber 
\end{eqnarray}
Meanwhile for $s\ge s_0 +1,$ we have 
\begin{eqnarray}
A_s \ge A_{s-1}+\sqrt{\frac{m A_{s-1}(1+\sigma  A_{s-1})}{2L}} \ge A_{s-1}+\sqrt{\frac{m A_{s-1}}{2L}}.
\end{eqnarray}

Thus we can use the mathematical induction method to prove the first lower bound in \eqref{eq:As2}: $\forall s\ge s_0,A_s \ge \frac{m}{32L}\Big(s-s_0 +2\sqrt{2}\Big)^2.$  

Firstly, for $ s = s_0, $ we have $A_{s}\ge \frac{m}{4L} =  \frac{m}{32L}(2\sqrt{2})^2.$ 

Then assume that for an $s\ge s_0+1,$ $A_{s-1} \ge \frac{m}{32L}\Big(s-1-s_0 +2\sqrt{2}\Big)^2$, then
\begin{eqnarray}
A_{s} &\ge& A_{s-1}+\sqrt{\frac{m A_{s-1}}{2L}} \ge \frac{m}{32L}\Big(s-s_0 +2\sqrt{2}\Big)^2 + \frac{m}{16L}(s-s_0) + \frac{m}{32L}(4\sqrt{2}-3) \nonumber\\
&\ge& \frac{m}{32L}\Big(s-s_0 +2\sqrt{2}\Big)^2.
\end{eqnarray}
Thus the first lower bound in \eqref{eq:As2} is proved.

Meanwhile, for $s\ge s_0+1,$ we also have 
\begin{eqnarray}
A_s &\ge& A_{s-1}+\sqrt{ \frac{m A_{s-1}(1+\sigma  A_{s-1})}{2L}} \ge
 A_{s-1}+ \sqrt{\frac{m\sigma}{2L}}A_{s-1} = \Big( 1 + \sqrt{\frac{m\sigma}{2L}}\Big)A_{s-1} \nonumber\\
& \ge & \Big( 1 + \sqrt{\frac{m\sigma}{2L}}\Big)^{s-s_0}A_{s_0}\nonumber\\
&\ge&\frac{m}{4L} \Big( 1 + \sqrt{\frac{m\sigma}{2L}}\Big)^{s-s_0}. 
\end{eqnarray}

Thus the second lower bound in \eqref{eq:As2} is proved. 

\end{proof}

\section{An Auxiliary Lemma}
By Assumption \ref{ass:lip} and \cite{nesterov1998introductory}, we have Lemma \ref{lem:lip}.
\begin{lemma}\label{lem:lip}
Under Assumption \ref{ass:lip}, 
$\forall \vx,\vy,$ 
\begin{eqnarray}
g(\vy) \le g(\vx) + \langle \nabla g(\vx), \vy-\vx\rangle + \frac{L}{2}\|\vy - \vx\|^2 \label{eq:L-smooth}
\end{eqnarray}
and $\forall i\in[n],$  $\forall \vx,\vy,$
\begin{eqnarray}
\|\nabla g_i(\vy) - \nabla g_i(\vx)\|^2 \le 2L(g_i(\vy) -g_i(\vx) - \langle \nabla g_i(\vx), \vy-\vx\rangle).\label{eq:L-smooth2}
\end{eqnarray}
\end{lemma}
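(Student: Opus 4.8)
The plan is to derive both inequalities from the defining properties in Assumption~\ref{ass:lip}, following the standard arguments in \cite{nesterov1998introductory}. For \eqref{eq:L-smooth} I would use only that $g$ is $L$-smooth, a fact already noted to follow from Assumption~\ref{ass:lip} because $g=\frac1n\sum_i g_i$ and Lipschitz continuity of the gradient is preserved under averaging. The idea is to write $g(\vy)-g(\vx)=\int_0^1\langle\nabla g(\vx+t(\vy-\vx)),\vy-\vx\rangle\,dt$, subtract $\langle\nabla g(\vx),\vy-\vx\rangle$ from both sides, and bound the resulting integrand by Cauchy--Schwarz together with $\|\nabla g(\vx+t(\vy-\vx))-\nabla g(\vx)\|\le Lt\|\vy-\vx\|$; integrating $Lt\|\vy-\vx\|^2$ over $t\in[0,1]$ produces the $\frac L2\|\vy-\vx\|^2$ term.

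For \eqref{eq:L-smooth2} I would additionally invoke convexity of $g_i$. Fix $i$ and $\vx$, and define the shifted function $h(\vz):=g_i(\vz)-\langle\nabla g_i(\vx),\vz\rangle$. Then $h$ is convex (it is $g_i$ minus a linear term), it is still $L$-smooth since $\nabla h(\vz)=\nabla g_i(\vz)-\nabla g_i(\vx)$ is $L$-Lipschitz, and $\nabla h(\vx)=\vzero$, so $\vx$ is a global minimizer of $h$. Applying \eqref{eq:L-smooth} to $h$ at the point $\vy$ with displacement $-\frac1L\nabla h(\vy)$ gives $h\big(\vy-\frac1L\nabla h(\vy)\big)\le h(\vy)-\frac1{2L}\|\nabla h(\vy)\|^2$; combining this with $h(\vx)\le h\big(\vy-\frac1L\nabla h(\vy)\big)$, which holds by global optimality of $\vx$, yields $h(\vx)\le h(\vy)-\frac1{2L}\|\nabla h(\vy)\|^2$. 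Rearranging and substituting back $\nabla h(\vy)=\nabla g_i(\vy)-\nabla g_i(\vx)$ and $h(\vy)-h(\vx)=g_i(\vy)-g_i(\vx)-\langle\nabla g_i(\vx),\vy-\vx\rangle$ gives exactly \eqref{eq:L-smooth2}.

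Since both steps are entirely classical, there is no real obstacle; the only point requiring a little care is the bookkeeping of which hypothesis is used where — \eqref{eq:L-smooth} relies solely on Lipschitz continuity of $\nabla g$, whereas \eqref{eq:L-smooth2} additionally uses convexity of $g_i$ through the global-minimizer property of the shifted function $h$, and invokes \eqref{eq:L-smooth} itself as an internal sub-step. One could instead simply cite \cite{nesterov1998introductory}, as the statement does, but the short self-contained argument above is what I would record.
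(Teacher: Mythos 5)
Your proposal is correct and follows essentially the same route as the paper: the integral representation plus Cauchy--Schwarz for \eqref{eq:L-smooth}, and for \eqref{eq:L-smooth2} the shifted function $\phi_i(\vy)=g_i(\vy)-g_i(\vx)-\langle\nabla g_i(\vx),\vy-\vx\rangle$ (your $h$ up to an additive constant), whose minimizer at $\vx$ combined with a one-step descent bound yields $\|\nabla\phi_i(\vy)\|^2\le 2L(\phi_i(\vy)-\phi_i(\vx))$. The only cosmetic difference is that the paper re-derives the descent bound for $\phi_i$ via the integral representation rather than invoking \eqref{eq:L-smooth} as a black box for a general $L$-smooth function, which is immaterial.
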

Under Assumption \ref{ass:lip}, Lemma \ref{lem:lip} are classical results in convex optimization. For completeness, we provide the proof of Lemma \ref{lem:lip} here. 

\begin{proof}[Proof of Lemma \ref{lem:lip}]
By Assumption \ref{ass:lip}, $\forall i\in[n], g_i(\vx)$ satisfies $\forall \vx, \vy, \|\nabla g_i(\vx) - \nabla g_i(\vy)\|\le L\|\vx-\vy\|.$ As a result, we have
\begin{eqnarray}
\|\nabla g(\vx) - \nabla g(\vy) \| &=& \left\|\frac{1}{n}\sum_{i=1}^n \nabla g_i(\vx) - \frac{1}{n}\sum_{i=1}^n \nabla g_i(\vy)\right\|\nonumber\\
&\le&\frac{1}{n}\sum_{i=1}^n\|g_i(\vx) - g_i(\vy)\|\nonumber\\
&\le&L\|\vx-\vy\|. 
\end{eqnarray}

The we have 
\begin{eqnarray}
g(\vy) &=& g(\vx) + \int_{0}^1 \langle \nabla g(\vx+ \tau(\vy-\vx)), \vy - \vx\rangle d \tau  \nonumber\\
&=& g(\vx) + \langle \nabla g(\vx), \vy-\vx\rangle + 
\int_{0}^1 \langle \nabla g(\vx+ \tau(\vy-\vx)) - \nabla g(\vx), \vy - \vx\rangle d \tau.
\end{eqnarray}
Then it follow that 
\begin{eqnarray}
g(\vy) - g(\vx) - \langle \nabla g(\vx), \vy - \vx\rangle 
&\le& \left|\int_{0}^1 \langle \nabla g(\vx+ \tau(\vy-\vx)) - \nabla g(\vx), \vy - \vx\rangle d \tau \right|    \nonumber\\
&\le&\int_{0}^1\left| \langle \nabla g(\vx+ \tau(\vy-\vx)) - \nabla g(\vx), \vy - \vx\rangle\right| d \tau\nonumber\\
&\le& \int_{0}^1\|\nabla g(\vx+ \tau(\vy-\vx)) - \nabla g(\vx)\| \|\vy - \vx\| d \tau\nonumber\\
&\le&  \int_{0}^1 L \tau \|\vy - \vx\|^2 d \tau \nonumber\\
&=& \frac{L}{2}\|\vy - \vx\|^2. 
\end{eqnarray}
Thus we obtain \eqref{eq:L-smooth}. 

Then denote $\forall i\in[n], \phi_i(\vy) = g_i(\vy) - g_i(\vx) - \langle \nabla g_i(\vx), \vy - \vx\rangle.$ Obviously $\phi_i(\vy)$ is also $L$-smooth. One can check that $\nabla g_i(\vx) = 0$ and so that $\min_{\vy} \phi_i(\vy) = \phi_i(\vx) = 0,$ which implies that 
\begin{eqnarray}
\phi_i(\vx) &\le& \phi_i\Big(\vy - \frac{1}{L}\nabla \phi_i(\vy)\Big)\nonumber\\
&=&  \phi_i(\vy) + \int_0^1 \left\langle \nabla \phi_i\big(\vy- \frac{\tau}{L}\nabla \phi_i(\vy)\big), -\frac{1}{L}\nabla \phi_i(\vy) \right\rangle d \tau \nonumber\\
&=& \phi_i(\vy) +  \left\langle\nabla\phi_i(\vy), -\frac{1}{L}\nabla \phi_i(\vy) \right\rangle + \int_0^1 \left\langle \nabla \phi_i\big(\vy- \frac{\tau}{L}\nabla \phi_i(\vy)\big) - \nabla \phi_i(\vy), -\frac{1}{L}\nabla \phi_i(\vy) \right\rangle d \tau\nonumber\\
&\le& \phi_i(\vy) -\frac{1}{L}\|\nabla \phi_i(\vy)\|^2 + \int_0^1 L \Big\| \frac{\tau}{L}\nabla \phi_i(\vy)\Big\| \|\frac{1}{L}\nabla \phi(\vy)\|d\tau \nonumber\\
&\le& \phi_i(\vy) - \frac{1}{2L}\|\nabla \phi_i(\vy)\|^2.
\end{eqnarray}
Then we have $\|\nabla \phi_i(\vy)\|^2\le 2L (\phi_i(\vy) - \phi_i(\vx)).$ Then by the definition of $\phi_i(\vy),$ we obtain \eqref{eq:L-smooth2}.
\end{proof}

\section{Experimental Details and Supplementary Experiments}\label{sec:experiment-appendix}

Besides running binary classification experiments on the two datasets a9a and covtype, we also run multi-class classification experiments on mnist and cifar10. The problem we solve is the \emph{$\ell_2$-norm regularized (multinomial) logistic regression} problem:
\begin{equation}
\min_{\vw\in\sR^{d\times (c-1)}} \hspace{-1mm}f(\vw):= \frac{1}{n}\sum_{j=1}^n \left(-\!\sum_{i=1}^{c-1}y_{j}^{(i)}\vw^{(i)^T}\vx_j + \log \Big(1 + \!\sum_{i=1}^{c-1}\exp\big(\vw^{(i)^T}\vx_j\big) \Big)  \right)
+ \frac{\lambda}{2}\sum_{i=1}^{c-1}\|\vw^{(i)}\|_2^2, \label{eq:logic}
\end{equation}
where $n$ is the number of samples, $c\in\{2, 3, \ldots\}$ denotes the number of class (for a9a and covtype, $c=2$; for mnist and cifar10, $c=10.$), $\lambda\ge 0$ denotes the regularization parameter, $\vy_j = (y_j^{(1)},y_j^{(2)}, \ldots, y_j^{(c-1)})^T$ is a one-hot vector or zero vector\footnote{Zero vector denotes the class of the $j$-th sample is $c$.}, and $\vw:=(\vw^{(1)}, \vw^{(2)}, \ldots, \vw^{(c-1)})\in \sR^{d\times (c-1)}$ denotes the variable to optimize. For the two-class datasets ``a9a'' and ``covtype'',  we have presented our results by choosing the regularization parameter $\lambda\in\{0, 10^{-8}, 10^{-4}\}$. For the ten-class datasets ``mnist'' and ``cifar10'', we choose $\lambda\in\{0, 10^{-6}, 10^{-3}\}$.

\begin{figure}[t]
  \psfrag{Log Loss}[bc]{}
  \psfrag{Number of Passes}[tc]{}
\begin{tabular}{c|ccc}
$\lambda$ & $0$ & $10^{-6}$ & $10^{-3}$ \\\hline
&&&\\
mnist&
\raisebox{-.5\height}{\includegraphics[width = 0.25\textwidth]{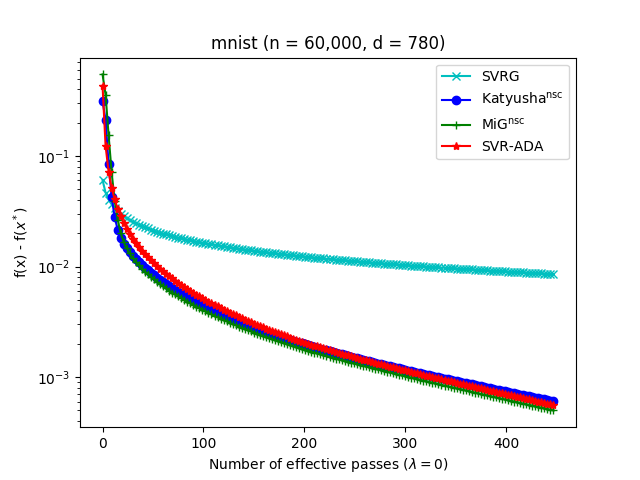}}
&
\raisebox{-.5\height}{\includegraphics[width = 0.25\textwidth]{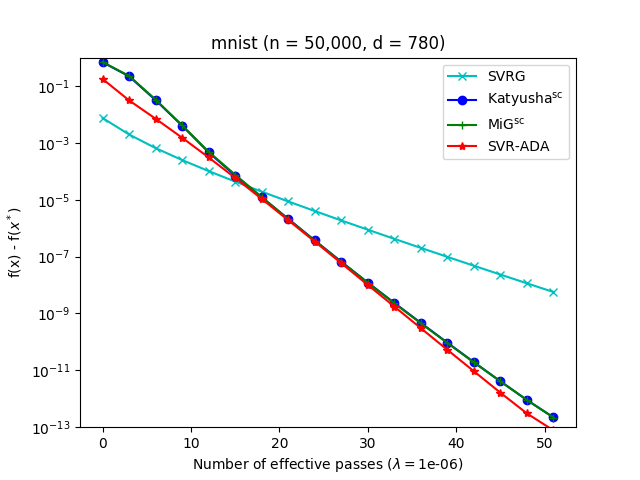}}  &
\raisebox{-.5\height}{\includegraphics[width = 0.25\textwidth]{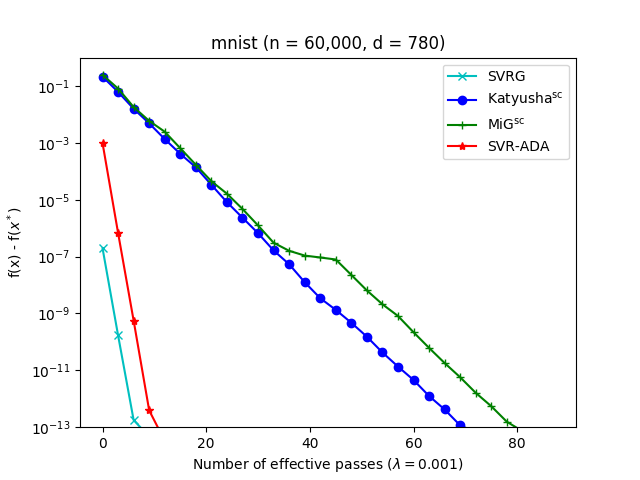}}  \\
cifar10 &
\raisebox{-.5\height}{\includegraphics[width = 0.25\textwidth]{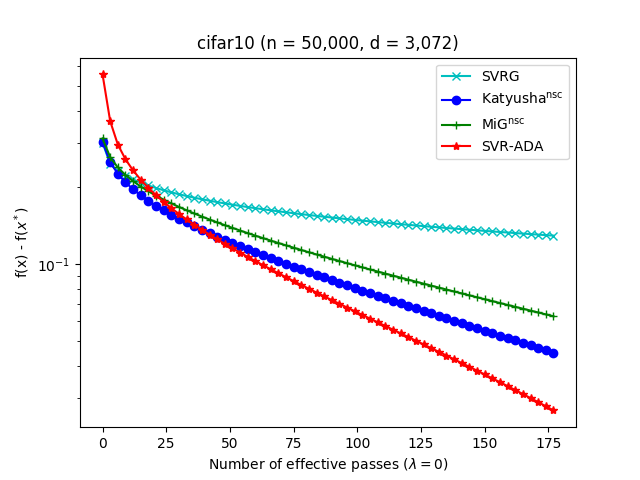}}
&
\raisebox{-.5\height}{\includegraphics[width = 0.25\textwidth]{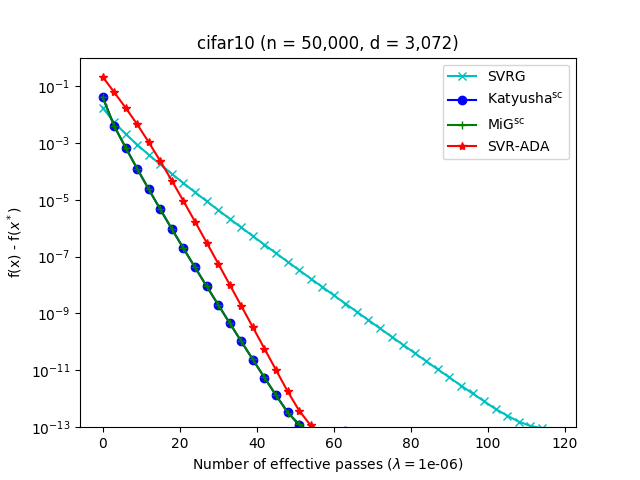}} &
\raisebox{-.5\height}{\includegraphics[width = 0.25\textwidth]{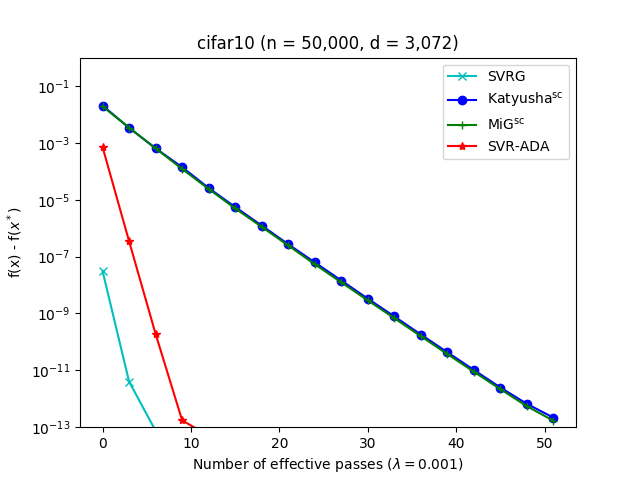}}  \\
\end{tabular}
\caption{Comparing VRADA with SVRG, Katyusha and MiG on $\ell_2$-norm  regularized multinomial logistic regression problems. 
The horizontal axis is the number of passes through the entire dataset, and the vertical axis is the optimality gap $f(\vx) - f(\vx^*)$. }
\label{fig:result2}
\end{figure}

For the four algorithms we compare, the common parameter to tune is the parameter  \emph{w.r.t.} Lipschitz constant\footnote{For logistic regression with normalized data, the Lipschitz constant is globally upper bounded \cite{zhang2015stochastic} by $1/4$, but in practice we can use a smaller one than $1/4.$}, which is tuned in $\{0.0125,0.025, 0.05, 0.1, 0.25, 0.5\}$.\footnote{In our experiments, due to the normalization of datasets, all the four algorithms will diverge when the parameter is less than $0.0125$. Otherwise, they always converge if the parameter is less than $0.5.$} 
All four algorithms are implemented in C++ under the same framework, while the figures are produced using Python.

As we see, despite there are some minor differences among different tasks/datasets shown in Figure \ref{fig:result} and Figure \ref{fig:result2}, the general behaviors are still very consistent. From both figures, our method VRADA is competitive with other two accelerated methods, and is much faster than the non-accelerated SVRG algorithm in the general convex setting and the strongly convex setting with a large conditional number. Meanwhile, in the strongly convex setting with a small conditional number, VRADA is still competitive with the non-accelerated SVRG algorithm and much faster than the other two accelerated algorithms of Katyusha$^{\rm sc}$ and MiG$^{\rm sc}$.

\vspace{-0.1in}

\end{document}